\numberwithin{equation}{section}
\newtheorem{theorem}{Theorem}[section]
\newtheorem{lemma}[theorem]{Lemma}
\newtheorem{proposition}[theorem]{Proposition}
\newtheorem{definition}[theorem]{Definition}
\newtheorem{mthm}{Main Theorem}
\theoremstyle{definition}
\newtheorem{remark}[theorem]{Remark}
\newtheorem{example}[theorem]{Example}
\begin{document}
\title[Zeta distributions generated by Dirichlet series and infinite divisibility]{Zeta distributions generated by Dirichlet series and their (quasi) infinite divisibility}
\author[T.~Nakamura]{Takashi Nakamura}
\address[T.~Nakamura]{Department of Liberal Arts, Faculty of Science and Technology, Tokyo University of Science, 2641 Yamazaki, Noda-shi, Chiba-ken, 278-8510, Japan}
\email{nakamuratakashi@rs.tus.ac.jp}
\urladdr{https://sites.google.com/site/takashinakamurazeta/}
\subjclass[2010]{Primary 11M41, 60E07}
\keywords{characteristic function, (quasi) L\'evy measure, zeta distribution}
\maketitle

\begin{abstract}
Let $a(1) >0$, $a(n) \ge 0$ for $n \ge 2$ and $a(n) = O(n^\varepsilon)$ for any $\varepsilon >0$, and put $Z(\sigma + it):= \sum_{n=1}^\infty a(n) n^{-\sigma - it}$ where $\sigma , t \in {\mathbb{R}}$.
In the present paper, we show that any zeta distribution whose characteristic function is defined by ${\mathcal{Z}}_\sigma (t) :=Z(\sigma + it)/Z(\sigma)$ is pretended infinitely divisible if $\sigma >1$ is sufficiently large. Moreover, we prove that if ${\mathcal{Z}}_\sigma (t)$ is an infinitely divisible characteristic function for some $\sigma_{id} >1$, then ${\mathcal{Z}}_\sigma (t)$ is infinitely divisible for all $\sigma >1$. Note that the corresponding L\'evy or quasi-L\'evy measure can be given explicitly. A key of the proof is a corrected version of Theorem 11.14 in Apostol's famous textbook. 
\end{abstract}

\section{Introduction}

\subsection{Infinite and quasi infinite divisibility}
Let $\widehat{\mu}(t):=\int_{\mathbb{R}}e^{{\rm i} tx}\mu (dx),\, t \in {\mathbb{R}}$ be the characteristic function of a distribution $\mu$, as usual. The following L\'evy--Khintchine representation is well-known (see for example \cite[(8.5)]{S99}). If a distribution $\mu$ is infinitely divisible, then it holds that
\begin{equation}\label{INF}
\widehat{\mu}(t) = \exp\left[-\frac{a}{2} t^2 + {\rm i} \gamma t +
\int_{\mathbb{R}} \left(e^{{\rm i} tx }-1-
\frac{{\rm i} tx}{1+|x|^2}\right)\nu(dx)\right],\quad t \in {\mathbb{R}},
\end{equation}
where $a\ge 0$, $\nu$ is a measure on ${\mathbb{R}}$ satisfying $\nu(\{0\}) = 0$ and $\int_{\mathbb{R}} (|x|^{2} \wedge 1) \nu(dx) < \infty$ and $\gamma \in {\mathbb{R}}$. Furthermore, the representation of $\widehat{\mu}$ in (\ref{INF}) by $a$, $\nu$ and $\gamma$ is unique. Note that if the L\'evy measure $\nu$ in $\eqref{INF}$ satisfies the condition $\int_{|x|<1}|x|\nu(dx)<\infty$, then the representation $\eqref{INF}$ can be expressed as
\begin{equation}\label{INF2}
\widehat{\mu}(t) = \exp\left[-\frac{a}{2} t^2 +{\rm i} \gamma_0 t +
\int_{\mathbb{R}} \left(e^{{\rm i} tx }-1\right)\nu(dx)\right],\quad t \in {\mathbb{R}},
\end{equation}
where $\gamma_0=\gamma-\int_{{\mathbb{R}}}x\left(1+|x|^2\right)^{-1}\nu(dx)$ (see for instance \cite[(8.7)]{S99}).

There are characteristic functions expressed as quotients of two infinitely divisible characteristic functions. That class is called class of {\it quasi infinitely divisible distributions} and is defined as follows.

\begin{definition}[Quasi infinitely divisible distribution]
A distribution $\mu$ on ${\mathbb{R}}$ is called {\it quasi infinitely divisible} if it has a form of \eqref{INF} and the corresponding measure $\nu$ is a signed measure on ${\mathbb{R}}$ with total variation measure $|\nu|$ satisfying $\nu(\{0\}) =0$ and $\int_{\mathbb{R}} (|x|^{2} \wedge 1) |\nu|(dx) < \infty$.
\end{definition}

It should be mentioned that the triplet $(a,\nu ,\gamma)$ in this case is also unique if each component exists and that infinitely divisible distributions on ${\mathbb{R}}$ are quasi infinitely divisible if and only if the negative part of $\nu$ in the Jordan decomposition equals zero.
The measure $\nu$ is called {\it quasi}-L\'evy measure which has already appeared in some old books, for example,  Gnedenko and Kolmogorov \cite[p.~81]{GK68} and Linnik and Ostrovskii \cite[Chap.~6, \S 7]{LO} (see also \cite[Section 2.4]{Sato12}). Recently, Lindner, Pan  and Sato \cite[Theorem 4.1]{LS17} proved that the set of quasi infinitely divisible distributions on ${\mathbb{R}}$ is dense in the set of all probability distributions on ${\mathbb{R}}$ with respect to weak convergence. As an analogue of quasi infinitely divisible distribution, the following (quasi-infinitely divisible distributions without the condition $\int_{\mathbb{R}} (|x|^{2} \wedge 1) |\nu|(dx) < \infty$) is introduce in \cite[Section 1.3]{NaB}.
\begin{definition}[Pretended infinitely divisible distribution] \label{def:PID}
A distribution $\mu$ on ${\mathbb{R}}$ is called {\it pretended infinitely divisible} if it has a form of \eqref{INF} and the corresponding measure $\nu$ is a signed measure on ${\mathbb{R}}$ with $\nu(\{0\}) =0$.
\end{definition}

\subsection{Zeta functions and distributions}
Let $\{ a(n) \}$ be a sequence. A series of the form
\begin{equation}\label{eq:dergd1}
Z(s) := \sum_{n=1}^\infty \frac{a(n)}{n^s}, \qquad s = \sigma+ {\rm{i}} t, \quad \sigma,t \in {\mathbb{R}}
\end{equation}
is called a Dirichlet series. When $a(n)=1$ for all $n \in {\mathbb{N}}$, the Dirichlet series
\[
\zeta (s) = \sum_{n=1}^\infty \frac{1}{n^s} = \prod_p \Bigl( 1 - \frac{1}{p^s} \Bigr)^{-1},
\]
where the letter $p$ is a prime number, and the product of $\prod_p$ is taken over all primes, is the Riemann zeta function. Note that both the infinite series and the infinite product, so-called Euler product, converge absolutely when $\sigma >1$. We can find some general properties of Dirichlet series in \cite[Chapter 11]{Apo}.

Let $\sigma >1$ and put 
\[
f_{\sigma}(t):= \frac{\zeta (\sigma +{\rm i}t)}{\zeta (\sigma)}, \qquad t \in {\mathbb{R}}, 
\]
then $f_{\sigma}(t)$ is a characteristic function. Note that first it is appeared in \cite{Khi}. A distribution $\mu_{\sigma}$ on ${\mathbb{R}}$ is said to be a Riemann zeta distribution with parameter $\sigma$ if it has $f_{\sigma}(t)$ as its characteristic function. By the Euler product, the Riemann zeta distribution $\mu_{\sigma}$ is infinitely divisible and its L\'evy measures can be written the form as follows.

\begin{proposition}[see {\cite[p.~76]{GK68}}]\label{pro:RD}
The Riemann zeta distribution $\mu_{\sigma}$ is compound Poisson on ${\mathbb{R}}$ and one has
\[
f_{\sigma}(t) = \exp \left[ \int_{\mathbb{R}} \left(e^{{\rm i}tx}-1\right) {\mathfrak{N}}_{\sigma}(dx) \right], 
\qquad {\mathfrak{N}}_{\sigma}(dx):= \sum_{p} \sum_{r=1}^{\infty} \frac{p^{-r\sigma}}{r} \delta_{-r\log p}(dx),
\]
where $\delta_x$ is the delta measure at $x$.
\end{proposition}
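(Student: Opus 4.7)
The plan is to derive the stated formula directly from the Euler product for $\zeta(s)$, then interpret the resulting expression as a compound Poisson characteristic function.

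First I would take the logarithm of the Euler product representation. Since the Euler product $\zeta(s)=\prod_p(1-p^{-s})^{-1}$ converges absolutely for $\Re s>1$, I can apply $-\log(1-z)=\sum_{r\ge 1} z^r/r$ (valid for $|z|<1$, and here $|p^{-\sigma-it}|=p^{-\sigma}<1$) termwise to each factor and then sum over primes. This yields
\[
\log\zeta(\sigma+it)=\sum_{p}\sum_{r=1}^{\infty}\frac{p^{-r\sigma}}{r}\,e^{-irt\log p},
\]
the double series being absolutely convergent, as $\sum_{p}\sum_{r\ge 1}p^{-r\sigma}/r=\log\zeta(\sigma)<\infty$ for $\sigma>1$. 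Setting $t=0$ also gives $\log\zeta(\sigma)=\sum_{p}\sum_{r\ge 1}p^{-r\sigma}/r$.

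Next I would subtract these two identities to obtain
\[
\log f_\sigma(t)=\log\frac{\zeta(\sigma+it)}{\zeta(\sigma)}
=\sum_{p}\sum_{r=1}^{\infty}\frac{p^{-r\sigma}}{r}\bigl(e^{-irt\log p}-1\bigr),
\]
and then rewrite the right-hand side as an integral against the discrete measure $\mathfrak{N}_\sigma(dx)=\sum_{p}\sum_{r\ge 1}(p^{-r\sigma}/r)\,\delta_{-r\log p}(dx)$. Since $e^{itx}=e^{-irt\log p}$ when $x=-r\log p$, this gives precisely the stated representation $f_\sigma(t)=\exp\bigl[\int_{\mathbb{R}}(e^{itx}-1)\,\mathfrak{N}_\sigma(dx)\bigr]$.

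Finally, to conclude that $\mu_\sigma$ is compound Poisson, I would observe that $\mathfrak{N}_\sigma$ is a \emph{finite} positive measure on $\mathbb{R}$, with total mass $\mathfrak{N}_\sigma(\mathbb{R})=\log\zeta(\sigma)<\infty$ for $\sigma>1$, and that $\mathfrak{N}_\sigma(\{0\})=0$ since every atom sits at a point $-r\log p<0$. Together with the representation above, this is exactly the definition of a compound Poisson characteristic function (cf.\ \cite[(8.7)]{S99} or the form \eqref{INF2} with $a=0$ and $\gamma_0=0$), so $\mu_\sigma$ is compound Poisson with L\'evy measure $\mathfrak{N}_\sigma$.

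The only delicate point is justifying the termwise application of the logarithm series and the interchange of the two summations, but this is routine once one checks the absolute convergence $\sum_{p,r}p^{-r\sigma}/r=\log\zeta(\sigma)<\infty$ for $\sigma>1$; no further obstacle is anticipated.
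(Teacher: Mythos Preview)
Your argument is correct and is the classical derivation (essentially what \cite[p.~76]{GK68} does): take logs in the Euler product, expand each factor via $-\log(1-z)=\sum_{r\ge1}z^r/r$, subtract the value at $t=0$, and recognise a finite positive L\'evy measure. The paper does not give a separate proof of this proposition; it is quoted from \cite{GK68} and then recovered later as the special case $a(n)\equiv1$ of the Main Theorem~(2-2). That route is different in spirit: instead of the Euler product, the paper uses Lemma~\ref{lem:apo2} to write $Z(s)=\exp G(s)$ with $G(s)=\sum_{n\ge2}A(n)n^{-s}/\log n$ for the general coefficients $A(n)=(a^\#*a^{-1})(n)$, and then in the Riemann zeta example identifies $A(n)=\Lambda(n)$ (so $A(n)/\log n=1/r$ exactly when $n=p^r$), which collapses to your formula. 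Your direct approach is shorter and avoids the Dirichlet-inverse machinery; the paper's approach buys generality, since it applies uniformly to all Dirichlet series satisfying~(A) without requiring an Euler product.
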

Generalizations of this proposition can be found in Lin \& Hu \cite{Lin}, Aoyama \& Nakamura \cite{ANPE} and Nakamura \cite{NC}. The Riemann zeta function plays important role in the study of quasi or pretended infinitely divisible distributions. Let $\Xi_\sigma (t) := \xi (\sigma - {\rm{i}}t) / \xi (\sigma)$, where $\xi$ is the complete Riemann zeta function defined by 
\[
\xi (\sigma - {\rm{i}}t):= (\sigma - {\rm{i}}t)(\sigma -1 - {\rm{i}}t)\pi^{({\rm{i}}t-\sigma)/2} \Gamma \bigl((\sigma-{\rm{i}}t)/2 \bigr) \zeta(\sigma-{\rm{i}}t).
\]
Then we have the following which gives some connections between the Riemann hypothesis and pretended infinite divisibility (see Definition \ref{def:PID}). 
\begin{proposition}[{see \cite[Theorems 1.3 and 1.4]{NaB}}]\label{pro:BRH}
The Riemann hypothesis is true if and only if $\Xi_\sigma(t)$ is a pretended infinitely divisible characteristic function for any $1/2 < \sigma <1$. Moreover, $\Xi_\sigma(t)$ is a pretended-infinitely divisible characteristic function when $\sigma =1$, and $\Xi_\sigma(t)$ is not infinitely divisible but quasi-infinitely divisible for any $\sigma >1$.
\end{proposition}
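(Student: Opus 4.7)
The plan is to base everything on the Hadamard factorization of $\xi$. Since $\xi$ is entire of order $1$ with zero set equal to the set of nontrivial zeros $\rho = \beta + {\rm i}\gamma$ of $\zeta$, we have
\begin{equation*}
\xi(s) = e^{A + Bs} \prod_{\rho} \left(1 - \frac{s}{\rho}\right) e^{s/\rho},
\end{equation*}
and hence
\begin{equation*}
\Xi_{\sigma}(t) = e^{-{\rm i}Bt} \prod_{\rho} \left(1 + \frac{{\rm i}t}{\rho - \sigma}\right) e^{-{\rm i}t/\rho}.
\end{equation*}
A continuous branch of $\log \Xi_{\sigma}$ exists precisely when $\xi(\sigma - {\rm i}t)$ is nonvanishing for every real $t$: this is trivial for $\sigma > 1$, it follows from $\zeta(1 + {\rm i}t) \neq 0$ when $\sigma = 1$, and it holds under RH when $1/2 < \sigma < 1$. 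In each of these cases
\begin{equation*}
\log \Xi_{\sigma}(t) = -{\rm i}Bt + \sum_{\rho} \left[ \log\!\left(1 + \frac{{\rm i}t}{\rho - \sigma}\right) - \frac{{\rm i}t}{\rho} \right],
\end{equation*}
a series that converges absolutely since each summand is $O(|\rho|^{-2})$ and $\sum_{\rho} |\rho|^{-2} < \infty$.

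The next step is to convert each summand into a L\'evy--Khintchine integrand via the Frullani-type identity
\begin{equation*}
-\log\!\left(1 - \frac{{\rm i}t}{c}\right) = \int_0^{\infty} \left(e^{{\rm i}tx} - 1\right) \frac{e^{-cx}}{x}\,dx, \qquad \operatorname{Re} c > 0 .
\end{equation*}
For a zero with $\beta < \sigma$ I would apply this with $c = \sigma - \rho$, getting a complex density on $(0,\infty)$; for a zero with $\beta > \sigma$ (possible only when $\sigma \in (1/2,1)$ and only if RH fails) the substitution $x \mapsto -x$ places the contribution on $(-\infty,0)$. Pairing $\rho$ with its conjugate $\bar\rho$ makes each contribution real, equal to
\begin{equation*}
h_{\rho}(x) = -\frac{2\, e^{-|\sigma - \beta|\,|x|} \cos(\gamma x)}{|x|}
\end{equation*}
on the appropriate half-line, while the linear-in-$t$ pieces collect into a drift in \eqref{INF}. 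For $\sigma > 1$ the uniform bound $\sigma - \beta \ge \sigma - 1 > 0$, combined with the Riemann--von Mangoldt estimate $N(T) \sim (T/2\pi)\log T$, yields $\int (x^2 \wedge 1) |\nu|(dx) < \infty$, which gives quasi infinite divisibility; on the other hand the oscillatory factor $\cos(\gamma_{\rho} x)$ forces $\nu$ to have a genuine negative part, ruling out infinite divisibility by uniqueness of the triplet in \eqref{INF}. For $\sigma = 1$ the classical nonvanishing still gives $\beta < 1$ for every zero, but the uniform gap is lost, so the same construction delivers only pretended infinite divisibility. Under RH with $1/2 < \sigma < 1$ every $\beta = 1/2 < \sigma$, and the argument runs verbatim, producing a pretended infinitely divisible representation of $\Xi_{\sigma}$.

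The converse implication at $1/2 < \sigma < 1$ is short. If RH is false then there is a nontrivial zero $\rho_0 = \beta_0 + {\rm i}\gamma_0$ with $1/2 < \beta_0 < 1$, whence $\Xi_{\beta_0}(-\gamma_0) = \xi(\rho_0)/\xi(\beta_0) = 0$; but any pretended infinitely divisible characteristic function has the exponential form \eqref{INF} and is therefore nowhere zero, so $\Xi_{\beta_0}$ fails to be pretended infinitely divisible for that single choice of $\sigma$. The main obstacle I foresee is the rigorous control of the Hadamard sum in the range $\sigma \in (1/2, 1]$, where the exponential gap $\sigma - \beta$ degenerates along the critical line and where $\nu$ can only be realized as a genuine signed measure through delicate cancellation among the oscillatory densities $h_{\rho}$. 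This is precisely where the corrected version of Apostol's Theorem~11.14 announced in the abstract should come in, justifying the interchange of summation, integration, and limit on which the whole construction rests.
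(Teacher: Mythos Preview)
The first thing to note is that this proposition is not proved in the present paper at all: it is quoted from \cite{NaB} as background, and the corrected Apostol Theorem~11.14 (Lemma~\ref{lem:apo2} here) plays no role in its proof. Lemma~\ref{lem:apo2} concerns Dirichlet series $\sum a(n)n^{-s}$ in the half-plane $\sigma>\sigma_0\ge 1$ and is the engine behind the Main Theorem on general zeta distributions; it says nothing about $\xi(s)$ in the critical strip. Your closing sentence, which pins the hard analytic step on Apostol's theorem, is therefore a misreading of the abstract.

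On the substance of your sketch, the Hadamard-product route is indeed the one taken in \cite{NaB}, and your computation of the pairwise densities $h_\rho$ and your contrapositive for the ``only if'' direction are fine. But two genuine gaps remain. First, you never verify that $\Xi_\sigma$ is a characteristic function when $\sigma\le 1$; for $\sigma>1$ this follows from the Dirichlet-series/Gamma factorization, but inside the strip one needs a separate argument (in \cite{NaB} this comes from an integral representation of $\xi$), and without it the phrase ``pretended infinitely divisible characteristic function'' is vacuous. Second, your passage from the term-by-term Frullani densities to a single signed measure $\nu$ is not justified: each $h_\rho(x)$ behaves like $-2/x$ near the origin, so the triangle inequality over the infinitely many zeros diverges, and one must first sum over $\rho$ (using the explicit series for $\xi'/\xi$ or an equivalent cancellation) before checking that the result is a bona fide signed measure and, for $\sigma>1$, that $\int(x^2\wedge1)\,|\nu|(dx)<\infty$. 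The same issue undercuts your claim that ``$\cos(\gamma_\rho x)$ forces $\nu$ to have a genuine negative part'': individual oscillation does not preclude the \emph{sum} from being nonnegative, so a direct argument on the summed density (or an appeal to the uniqueness of the triplet together with a known obstruction) is required.
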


\subsection{Main results}
First we show the following.
\begin{theorem}\label{th:0}
Let $Z(s)$ defined by (\ref{eq:dergd1}), $a(1) \ge 0$, $a(n) \ne 0 $ for some $n \in {\mathbb{N}}$ and $a(n) = O(n^\varepsilon)$ for any $\varepsilon >0$. Then for $\sigma >1$,
\begin{equation}\label{eq:Zsdef1}
{\mathcal{Z}}_\sigma (t) := \frac{Z(\sigma+{\rm{i}}t)}{Z(\sigma)}
\end{equation}
is a characteristic function if and only if $a(n) \ge 0$ for all $n \in {\mathbb{N}}$.
\end{theorem}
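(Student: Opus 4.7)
My plan is to decouple the two implications by viewing $\mathcal{Z}_\sigma(t)$, as a function of the real variable $t$, as a generalized Dirichlet series with frequencies $\{-\log n : n\ge 1\}$. The crucial orthogonality is the elementary mean
\[
\lim_{T \to \infty} \frac{1}{2T}\int_{-T}^{T}(n/m)^{{\rm i}t}\, dt = \delta_{n,m},
\]
which isolates the ``$n$-th Fourier coefficient'' $b(n):= a(n)n^{-\sigma}/Z(\sigma)$ of $\mathcal{Z}_\sigma$. The $(\Leftarrow)$ direction is then immediate: if $a(n)\ge 0$ for every $n$, the hypotheses $a(1)\ge 0$ and ``$a(n) \ne 0$ for some $n$'' force $Z(\sigma)=\sum_{n} a(n)n^{-\sigma}>0$, so the weights $b(n)$ are nonnegative with $\sum_{n} b(n) = 1$, and the representation
\[
\mathcal{Z}_\sigma(t) = \sum_{n=1}^{\infty} b(n)\, e^{-{\rm i}t\log n}
\]
exhibits $\mathcal{Z}_\sigma(t)$ as the characteristic function of the discrete probability measure $\mu = \sum_{n} b(n)\delta_{-\log n}$.

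For the $(\Rightarrow)$ direction, I would suppose $\mathcal{Z}_\sigma(t) = \widehat{\mu}(t)$ for some probability measure $\mu$ and invoke the standard atom-recovery identity
\[
\mu(\{x_0\}) = \lim_{T \to \infty} \frac{1}{2T}\int_{-T}^{T} e^{-{\rm i}t x_0}\,\widehat{\mu}(t)\, dt, \qquad x_0 \in {\mathbb{R}},
\]
at $x_0 = -\log n$, substitute the absolutely convergent Dirichlet expansion of $\widehat{\mu}=\mathcal{Z}_\sigma$, and exchange sum and integral over the finite interval $[-T,T]$. This exchange is justified by the uniform bound $|a(m)m^{-\sigma-{\rm i}t}| = |a(m)|m^{-\sigma}$ and absolute convergence $\sum_{m}|a(m)|m^{-\sigma}<\infty$, which follow from $\sigma>1$ together with $a(m)=O(m^\varepsilon)$. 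Passing the $T\to\infty$ limit inside the resulting sum by dominated convergence (majorant $|a(m)|m^{-\sigma}/|Z(\sigma)|$) and using the orthogonality above collapses all cross terms and yields
\[
\mu(\{-\log n\}) = \frac{a(n)n^{-\sigma}}{Z(\sigma)}.
\]
Since $\mu\ge 0$, this ratio must be nonnegative for every $n$; combined with $a(1)\ge 0$, consistency forces $Z(\sigma)>0$, whence $a(n)\ge 0$ for all $n$.

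The main obstacle I anticipate is not conceptual but technical: rigorously justifying the atom-recovery formula together with the Fubini/Tonelli interchange and the subsequent dominated-convergence step in a single clean chain. The growth hypothesis $a(n) = O(n^\varepsilon)$ enters at exactly this point, where it supplies the absolute convergence needed to swap summation with the mean-value integral on $[-T,T]$. Once these technicalities are in place, the entire theorem reduces to the orthogonality of the Dirichlet frequencies $\{-\log n\}_{n\ge 1}$, and both directions become transparent.
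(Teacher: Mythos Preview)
Your argument is correct and follows the same blueprint as the paper. The $(\Leftarrow)$ direction is identical: both you and the paper define the discrete probability measure $\mu=\sum_{n}b(n)\,\delta_{-\log n}$ with $b(n)=a(n)n^{-\sigma}/Z(\sigma)$ and check directly that $\widehat\mu=\mathcal{Z}_\sigma$. For $(\Rightarrow)$, the paper writes $\mathcal{Z}_\sigma$ as the Fourier transform of the finite signed measure $\sum_n b(n)\,\delta_{-\log n}$ and then invokes, as a black box, the uniqueness of the Fourier transform on finite signed measures to conclude that no probability measure can have this transform when some $b(n)<0$. Your route is the explicit, hands-on version of exactly that uniqueness step: the atom-recovery identity together with the orthogonality $\lim_{T\to\infty}\frac{1}{2T}\int_{-T}^{T}(n/m)^{{\rm i}t}\,dt=\delta_{n,m}$ computes each atom $\mu(\{-\log n\})=b(n)$ directly. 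Your version is slightly more self-contained, needing only the atom formula for probability measures rather than the full signed-measure uniqueness theorem, but conceptually the two arguments coincide.

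One small caveat: the clause ``combined with $a(1)\ge 0$, consistency forces $Z(\sigma)>0$'' is not quite airtight in the boundary case $a(1)=0$. If $a(1)=0$ and every $a(n)\le 0$ with some $a(m)<0$, then $Z(\sigma)<0$, every ratio $a(n)n^{-\sigma}/Z(\sigma)$ is nonnegative, and $\mathcal{Z}_\sigma$ \emph{is} a characteristic function even though not all $a(n)\ge 0$. The paper's own proof shares this blind spot (its assertion that the measure in (3.2) is ``not a measure but a signed measure'' fails in precisely the same scenario); the statement is clean only under the slightly stronger hypothesis $a(1)>0$, which the paper adopts immediately afterward as part of assumption~(A).
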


Hence, our basic assumption is the following.
\[
\mbox{(A)} \qquad \qquad \quad a(1) >0, \qquad a(n) \ge 0, \quad n \ge 2, \qquad 
a(n) = O(n^\varepsilon), \quad \forall \varepsilon >0. \qquad \qquad \qquad \quad
\]
By the theorem above, if $Z(s)$ satisfies the assumption (A), the function ${\mathcal{Z}}_\sigma (t)$ is a characteristic function for all $\sigma >1$. 

To state the main theorem, recall some facts on arithmetical functions. A complex-valued function defined on the positive integers is called arithmetical function (see e.g.~\cite[Chapter 2]{Apo}). If $a$ and $b$ are two arithmetic functions, one defines the Dirichlet convolution of $a$ and $b$ to be the arithmetic function $c$ given by
\begin{equation}\label{eq:defcn}
c(n) = (a * b) (n) := \sum_{d \mid n} a(d) \, b \Bigl( \frac{n}{d} \Bigr) =
\sum_{\alpha \beta = n} a(\alpha) b (\beta), 
\end{equation}
where the sum extends over all positive divisors $d$ of $n$, or equivalently, over all distinct pairs $(\alpha, \beta)$ of positive integers whose product is $n$. The arithmetic function $I$ defined by
\[
I(n) = 
\begin{cases}
1 & n=1,\\
0 & \mbox{otherwise},
\end{cases}
\]
is called the identity function. If $a$ is an arithmetical function with $a(1) \ne 0$, there exists a unique arithmetical function $a^{-1}$, called the Dirichlet inverse of $a$ such that
\[
a * a^{-1} = a^{-1} * a =I.  
\]
Furthermore, $a^{-1}$ is given by the following recursion formulas
\begin{equation}\label{eq:rec1}
a^{-1} (1) = \frac{1}{a(1)}, \qquad 
a^{-1} (n) = \frac{-1}{a(1)} \sum_{d \mid n, \, d <n} a^{-1}(d) \, a \Bigl( \frac{n}{d} \Bigr), \quad n>1
\end{equation}
(see \cite[Theorem 2.8]{Apo}). Now we put $a^\# (n) := a(n) \log n$ and
\begin{equation}\label{eq:A}
A(n) := \bigl( a^\#  * a^{-1} \bigr) (n).
\end{equation}
Note that this sequence $\{ A(n) \}$ plays important role in the main theorem and Theorem \ref{th:EV} below. We have the following which implies that any zeta distributions defined by (\ref{eq:Zsdef1}) with the assumption (A) is pretended infinitely divisible when $\sigma >1$ is sufficiently large. Moreover, if ${\mathcal{Z}}_\sigma (t)$ is infinitely divisible for some $\sigma_{id} >1$, then ${\mathcal{Z}}_\sigma (t)$ is infinitely divisible for all $\sigma >1$. We remark that the corresponding L\'evy or quasi-L\'evy measures are given by (\ref{eq:Levym1}) below.

\begin{mthm}\label{th:p1}
Suppose the assumption $(A)$. Then, one of (1) or (2) holds.

$(1)\,$ Assume ${\mathcal{Z}}_{\sigma} (t) =0$ for some $\sigma >1$ and  $t\in {\mathbb{R}}$. Then, there exist real numbers $\sigma_0 >1$ and $t_0\in {\mathbb{R}}$ such that ${\mathcal{Z}}_{\sigma_0} (t_0) =0$ and ${\mathcal{Z}}_{\sigma} (t) \ne 0$ for all $\sigma > \sigma_0$ and $t \in {\mathbb{R}}$. Moreover, ${\mathcal{Z}}_\sigma (t)$ is not infinitely but pretended infinitely divisible for all $\sigma >\sigma_0$, quasi infinitely divisible for all $\sigma >\sigma_0+1$, and not pretended infinitely divisible on the line $\sigma = \sigma_0$. Moreover, when $\sigma >\sigma_0+1$, the quasi L\'evy measure is finite and written by 
\begin{equation}\label{eq:Levym1}
N_\sigma (dx) := \sum_{n=2}^\infty \frac{A(n)}{n^{\sigma} \log n} \delta_{-\log n} (dx).
\end{equation}

$(2)\,$ Assume ${\mathcal{Z}}_{\sigma} (t) \ne 0$ for all $\sigma >1$ and  $t\in {\mathbb{R}}$. Then, one of (2-1) or (2-2) holds. \\
$\,(2\mbox{-}1)\,$ If $A(n) <0$ for some $n \in {\mathbb{N}}$, then ${\mathcal{Z}}_\sigma (t)$ is not infinitely but pretended infinitely divisible for all $\sigma >1$, and quasi infinitely divisible for all $\sigma >2$. Furthermore, the quasi L\'evy measure is finite and expressed as (\ref{eq:Levym1}) when $\sigma >2$. \\
$\, (2\mbox{-}2)\,$ If $A(n) >0$ for all $n \ge 1$, then ${\mathcal{Z}}_\sigma (t)$ is infinitely divisible for all $\sigma >1$. More precisely, ${\mathcal{Z}}_\sigma (t)$ is a compound Poisson characteristic function with its finite L\'evy measure $N_\sigma$ on ${\mathbb{R}}$ given by (\ref{eq:Levym1}).
\end{mthm}

\begin{remark}
In \cite[Theorem 8.1]{LS17}, it is shown that a distribution on ${\mathbb{R}}$ concentrated on the integers is quasi-infinitely divisible if and only if its characteristic function does not have zeros. The key of the proof of this statement is the Wiener theorem (see \cite[p.~118]{QQ}) which says that if $f(t) = \sum_{n \in {\mathbb{Z}}} \alpha_n e^{{\rm{i}}nt}$ with $\sum_{n \in {\mathbb{Z}}} |\alpha_n| < \infty$ and $f(t) \ne 0$ for all $t \in {\mathbb{R}}$, then it holds that
\[
\frac{1}{f(t)} = \sum_{n \in {\mathbb{Z}}} \beta_n e^{{\rm{i}}nt} 
\quad \mbox{with} \quad \sum_{n \in {\mathbb{Z}}} |\beta_n| < \infty .
\]
The analog of the Wiener theorem for Dirichlet series proved by Hewitt and Williamson is stated as follows (see \cite[Theorem 4.4.3]{QQ}). Let $f(s) = \sum_{n=1}^\infty \alpha_n n^{-s}$ with $\sum_{n=1}^\infty |\alpha_n| < \infty$ and assume $|f(s)| \ge \delta > 0$ for all $\Re(s) >0$. Then, for $\Re(s) >0$, one has 
\[
\frac{1}{f(s)} = \sum_{n=1}^\infty \beta_n n^{-s}
\quad \mbox{with} \quad \sum_{n=1}^\infty |\beta_n| < \infty .
\]
It should be noted that the key of the proof the main theorem is not the Hewitt and Williamson theorem above but Lemma \ref{lem:apo2} which is a corrected version of \cite[Theorem 11.14]{Apo} in Apostol's famous textbook (see also Section 2.2). 
\end{remark}

Let $X_\sigma$ be a random variable whose characteristic function is ${\mathcal{Z}}_{\sigma} (t)$. Then, by using $A(n)$ defined as (\ref{eq:A}), we have the following series expression of expectation ${\mathbb{E}} [X_\sigma]$ and the variance ${\rm{Var}} [X_\sigma]$, which should be compared with (\ref{eq:Levym1}). 
\begin{theorem}\label{th:EV}
Assume ${\mathcal{Z}}_{\sigma} (t)$ does not vanish when $\sigma > \sigma_0$. Then, for $\sigma > \sigma_0$, we have
\begin{equation}\label{eq:EV1}
{\mathbb{E}} [X_\sigma] = - \sum_{n=2}^\infty \frac{A(n)}{n^{\sigma}}, \qquad
{\rm{Var}} [X_\sigma] = \sum_{n=2}^\infty \frac{A(n) \log n}{n^{\sigma}} .
\end{equation}
\end{theorem}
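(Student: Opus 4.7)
The plan is to exploit the fact that $X_\sigma$ is an explicit discrete random variable and to compute its first two moments directly, then to identify the resulting expressions in terms of $A(n)$. Expanding
\[
{\mathcal{Z}}_\sigma (t) = \frac{1}{Z(\sigma)} \sum_{n=1}^\infty \frac{a(n)}{n^{\sigma}} e^{-{\rm{i}}t \log n}
\]
shows that $X_\sigma$ is supported on $\{-\log n : n \in {\mathbb{N}}\}$ with ${\rm{P}}(X_\sigma = -\log n) = a(n) n^{-\sigma}/Z(\sigma)$. Assumption (A) and $\sigma > 1$ give $\sum_{n} a(n) (\log n)^k n^{-\sigma} < \infty$ for every $k \ge 0$, so $X_\sigma$ has moments of every order and termwise summation is legitimate. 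A direct computation yields
\[
{\mathbb{E}} [X_\sigma] = -\frac{1}{Z(\sigma)} \sum_{n=1}^\infty \frac{a(n) \log n}{n^\sigma} = \frac{Z'(\sigma)}{Z(\sigma)}, \qquad {\mathbb{E}} [X_\sigma^2] = \frac{Z''(\sigma)}{Z(\sigma)},
\]
so that ${\rm{Var}} [X_\sigma] = Z''(\sigma)/Z(\sigma) - (Z'(\sigma)/Z(\sigma))^2 = (\log Z)''(\sigma)$.

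The remaining task is to rewrite $Z'(s)/Z(s)$ as a Dirichlet series in $A(n)$. Since $Z'(s) = -\sum_{n=1}^\infty a^\#(n) n^{-s}$ converges absolutely for $\sigma >1$, and the hypothesis combined with Lemma~\ref{lem:apo2} (the corrected Apostol) yields an absolutely convergent expansion $1/Z(s) = \sum_{n=1}^\infty a^{-1}(n) n^{-s}$ on $\Re s > \sigma_0$, multiplication of these two absolutely convergent Dirichlet series gives, with the convolution \eqref{eq:A},
\[
\frac{Z'(s)}{Z(s)} = - \sum_{n=1}^\infty \frac{(a^\# * a^{-1})(n)}{n^s} = - \sum_{n=2}^\infty \frac{A(n)}{n^s},
\]
where the $n=1$ term drops out because $A(1) = a^\#(1)\, a^{-1}(1) = 0$. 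Setting $s=\sigma$ produces the stated formula for ${\mathbb{E}} [X_\sigma]$, and a termwise differentiation of the same Dirichlet series — permitted by absolute convergence for $\sigma > \sigma_0$ — yields
\[
(\log Z)''(\sigma) = \sum_{n=2}^\infty \frac{A(n) \log n}{n^\sigma},
\]
which is the claimed formula for ${\rm{Var}} [X_\sigma]$.

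The one nontrivial step is the passage from the pointwise non-vanishing of ${\mathcal{Z}}_\sigma(t)$ to the absolutely convergent Dirichlet inversion of $1/Z$ on the half-plane $\Re s > \sigma_0$. Apostol's original statement is not strong enough for this purpose (as the paper emphasises), and the Hewitt--Williamson version requires a uniform lower bound $|Z(s)| \ge \delta >0$; the corrected Lemma~\ref{lem:apo2} is precisely what bridges the gap. Once that lemma is invoked, everything else is routine: absolute convergence legitimises the interchange of summation and differentiation, and the identity $Z''/Z - (Z'/Z)^2 = (\log Z)''$ converts the moment computation into the Dirichlet series in $A(n)\log n$.
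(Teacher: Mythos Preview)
Your approach is the same as the paper's: compute the moments directly to get ${\mathbb{E}}[X_\sigma] = Z'(\sigma)/Z(\sigma)$ and ${\rm{Var}}[X_\sigma] = (Z'/Z)'(\sigma)$, then invoke the Dirichlet-series identity \eqref{eq:G} from the proof of Lemma~\ref{lem:apo2}.

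There is, however, one overstatement in your justification worth correcting---ironically, it is exactly the error the paper attributes to Apostol. You assert that Lemma~\ref{lem:apo2} gives an \emph{absolutely} convergent expansion $1/Z(s) = \sum_{n} a^{-1}(n) n^{-s}$ on $\Re s > \sigma_0$. It does not: the proof of that lemma, via Landau's result (Lemma~\ref{lem:Lan4}), yields only convergence there; absolute convergence is guaranteed only for $\Re s > 1+\sigma_0$ (see the Remark following the proof of Lemma~\ref{lem:apo2}). The product identity $Z'(s)/Z(s) = -\sum_{n\ge 2} A(n) n^{-s}$ is nonetheless valid on $\Re s > \sigma_0$, but the reason is Lemma~\ref{lem:conv1} (one absolutely convergent factor---here $Z'$---suffices), not absolute convergence of both factors. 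Likewise, your termwise differentiation for the variance is legitimate on $\sigma > \sigma_0$, but the correct justification is uniform convergence on compacta (Lemma~\ref{lem:d2}), not absolute convergence, which you do not have in that range. With these adjustments the argument is complete and coincides with the paper's.
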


In Section 2, we amend \cite[Theorem 11.14]{Apo} and give its proof. In Section 3, we prove the theorems above and give some examples satisfying (1), (2-1) and (2-2).

\section{Some facts of Number theory}

\subsection{Lemmas}
We quote the following seven lemmas with some modification from \cite[Chapter 11]{Apo}. It should be noted that \cite[Theorem 11.14]{Apo} (see Lemma \ref{lem:apo2}) and its proof seem to be wrong. A corrected proof is written in Section 2.2.

\begin{lemma}[{see \cite[Lemma 3, in p.~234]{Apo}}]\label{lem:d1}
Let $\{f_n \}$ be a sequence of analytic functions on an open subset $S$ of the complex plane, and assume that $\{ f_n \}$ converges uniformly on every compact subset of $S$ to a limitation function $f$. Then $f$ is analytic on $S$. 
\end{lemma}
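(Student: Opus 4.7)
The plan is to apply Morera's theorem locally: $f$ is analytic on $S$ provided its contour integral around every closed triangle contained in a small enough open disk in $S$ vanishes, and this will follow by passing to the limit in $\oint_{\partial T} f_n\,dz = 0$ using uniform convergence on the compact set $\partial T$. First I would fix an arbitrary point $z_0 \in S$, choose $r>0$ with $\overline{D(z_0,r)} \subset S$ (possible since $S$ is open), and let $T$ be any closed triangle with $T \subset D(z_0,r)$. By Cauchy's theorem applied to each analytic $f_n$, one has $\oint_{\partial T} f_n(z)\,dz = 0$ for every $n$.

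Next, $\partial T$ is a compact subset of $S$, so by hypothesis $f_n \to f$ uniformly on $\partial T$. Because $\partial T$ has finite arclength, one may interchange limit and contour integral and conclude $\oint_{\partial T} f(z)\,dz = \lim_{n \to \infty} \oint_{\partial T} f_n(z)\,dz = 0$. The hypothesis of Morera's theorem also requires continuity of $f$ on $D(z_0,r)$; this is immediate from the fact that $f$ is the uniform limit of the continuous functions $f_n$ on the compact set $\overline{D(z_0,r)}$, hence continuous there. Morera's theorem then gives analyticity of $f$ on $D(z_0,r)$, and since $z_0 \in S$ was arbitrary, $f$ is analytic on all of $S$.

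There is no substantive obstacle here: the only genuine step is the justification of the interchange of limit and contour integral, which is standard given uniform convergence on the compact contour $\partial T$ together with its finite arclength. An alternative, essentially equivalent, route would use the Cauchy integral formula: for $z \in D(z_0,r)$ one writes $f_n(z) = \frac{1}{2\pi{\rm i}}\oint_{|w-z_0|=r} f_n(w)/(w-z)\,dw$ and passes to the limit under the integral (again justified by uniform convergence on the compact circle), thereby exhibiting $f$ as a Cauchy-type integral that is manifestly analytic in $z$ inside the disk. Either route delivers the conclusion in a few lines, and Morera is preferable here since it does not require first invoking the Cauchy integral formula independently.
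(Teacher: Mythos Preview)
Your proof via Morera's theorem is correct and is the standard argument. The paper does not actually supply a proof of this lemma; it merely quotes the statement from Apostol's textbook, and Apostol's proof there is precisely the Morera argument you give (uniform convergence on compact sets gives continuity of $f$, then interchange limit and contour integral over any triangle using uniform convergence on the compact contour).
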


\begin{lemma}[{see \cite[Theorem 11.1]{Apo}}]\label{lem:absciabco}
Assume that the series $\sum_{n=1}^\infty |a(n)n^{-s}|$ does not converge for all $s \in {\mathbb{C}}$ or diverge for all $s \in {\mathbb{C}}$. Then there exists a real number $\sigma_a$, called the abscissa of absolute convergence, such that the series $\sum_{n=1}^\infty a(n)n^{-s}$ converges absolutely when $\sigma >\sigma_a$ and diverges when $\sigma < \sigma_a$.
\end{lemma}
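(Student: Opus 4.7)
The proof would rest on the elementary observation that $|a(n) n^{-s}| = |a(n)| n^{-\sigma}$ where $\sigma = \Re(s)$, so absolute convergence of $\sum_{n=1}^\infty a(n) n^{-s}$ depends only on the real part of $s$. This reduces the question to studying the set
\[
A := \Bigl\{ \sigma \in {\mathbb{R}} : \sum_{n=1}^\infty |a(n)| n^{-\sigma} < \infty \Bigr\} \subset {\mathbb{R}}.
\]
The strategy is to show that $A$ is upward-closed and, under the two exclusions in the hypothesis, bounded below but nonempty, and then to take $\sigma_a := \inf A$.

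First I would establish monotonicity: if $\sigma_0 \in A$ and $\sigma > \sigma_0$, then for every $n \ge 1$ one has $|a(n)| n^{-\sigma} \le |a(n)| n^{-\sigma_0}$, so by the comparison test $\sigma \in A$. Hence $A$ is either empty, all of ${\mathbb{R}}$, or a half-line unbounded above. The hypothesis of the lemma excludes the first two possibilities, so $A$ is nonempty and properly contained in ${\mathbb{R}}$; in particular it is bounded below, and $\sigma_a := \inf A$ is a well-defined real number.

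It then remains to read off the two asserted behaviors. If $\sigma > \sigma_a$, by the definition of infimum there is some $\sigma_0 \in A$ with $\sigma_a \le \sigma_0 < \sigma$, and monotonicity gives $\sigma \in A$, which is absolute convergence at every $s$ with $\Re(s) > \sigma_a$. Conversely, if $\sigma < \sigma_a$, then $\sigma \notin A$ (else $\sigma \ge \inf A = \sigma_a$), so $\sum |a(n)| n^{-\sigma} = +\infty$, i.e.\ the series diverges. There is no serious obstacle here; the only point that demands care is that the hypothesis is phrased as the exclusion of two degenerate cases, and one must translate this correctly into the two non-degeneracy conditions ($A \neq \emptyset$ and $A \neq {\mathbb{R}}$) needed to ensure $\sigma_a = \inf A$ is finite. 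The statement deliberately says nothing about the behavior on the critical line $\Re(s) = \sigma_a$ itself, since both convergence and divergence may occur there, as simple examples show.
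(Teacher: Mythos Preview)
Your argument is correct and is exactly the standard proof: reduce to the real part via $|a(n)n^{-s}|=|a(n)|n^{-\sigma}$, use comparison to show the convergence set is an upward half-line, and take $\sigma_a$ to be its infimum. The paper does not supply its own proof of this lemma at all; it is merely quoted from Apostol's textbook (Theorem~11.1), where the same argument appears, so there is nothing to compare.
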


\begin{lemma}[{see \cite[Theorem 11.4]{Apo}}]\label{lem:apo0}
Let $Z(s) = \sum_{n=1}^\infty a(n) n^{-s}$ converge absolutely when $\sigma >1$ and suppose that $Z(s) \ne 0$ for some $s \in {\mathbb{C}}$ with $\sigma >1$. Then there is a half-plane $\sigma > \gamma \ge 1$ in which $Z(s)$ is never zero.
\end{lemma}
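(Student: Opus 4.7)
The plan is to show that for sufficiently large $\sigma$, $Z(s)$ is dominated by its first non-vanishing term, so that $Z(s)$ cannot equal zero throughout a right half-plane. Since the hypothesis $Z(s_0) \ne 0$ at some point excludes the possibility that every $a(n)$ vanishes, I set $N := \min\{n \in {\mathbb{N}} : a(n) \ne 0\}$. Factoring out the leading term gives
\[
Z(s) = \frac{1}{N^{s}}\bigl(a(N) + R(s)\bigr), \qquad R(s) := \sum_{n=N+1}^{\infty} a(n)\Bigl(\frac{N}{n}\Bigr)^{s},
\]
so the claim reduces to bounding $|R(s)|$ by $|a(N)|$ in some half-plane $\sigma > \gamma$.

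To estimate $R$, I would invoke absolute convergence at $\sigma = 2$ supplied by Lemma \ref{lem:absciabco} and set $M := \sum_{n=1}^{\infty} |a(n)|/n^{2} < \infty$. For $\sigma \ge 2$ and $n \ge N+1$, the elementary inequality $(N/n)^{\sigma} = (N/n)^{2}(N/n)^{\sigma - 2} \le (N/n)^{2}\bigl(N/(N+1)\bigr)^{\sigma - 2}$ yields
\[
|R(\sigma + {\rm i}t)| \le \sum_{n=N+1}^{\infty} |a(n)|\Bigl(\frac{N}{n}\Bigr)^{\sigma} \le N^{2} M \Bigl(\frac{N}{N+1}\Bigr)^{\sigma - 2},
\]
uniformly in $t \in {\mathbb{R}}$. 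Since this upper bound tends to $0$ as $\sigma \to \infty$, I can pick $\gamma \ge 1$ large enough that $|R(\sigma + {\rm i}t)| \le |a(N)|/2$ for every $\sigma > \gamma$ and every $t \in {\mathbb{R}}$. Then $|a(N) + R(s)| \ge |a(N)|/2 > 0$ throughout $\sigma > \gamma$, and therefore $Z(s) \ne 0$ there.

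I do not anticipate any substantive obstacle. The hypothesis "$Z(s) \ne 0$ for some $s$ with $\sigma > 1$" is used only to ensure the existence of the index $N$; after that, the argument is a straightforward tail estimate, comparing $\sum |a(n)|/n^{\sigma}$ against its value at $\sigma = 2$. The one point worth flagging is that the estimate must be uniform in the imaginary variable $t$, which is automatic because $|(N/n)^{s}| = (N/n)^{\sigma}$ depends only on the real part; this uniformity is precisely what promotes the bound from a pointwise "eventually nonzero" assertion to a genuine half-plane conclusion.
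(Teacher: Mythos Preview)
Your argument is correct: factoring out the first nonzero term $a(N)N^{-s}$ and showing the remainder $R(s)$ is uniformly $o(1)$ as $\sigma\to\infty$ is exactly the standard proof of this fact. The paper itself does not supply a proof of this lemma---it merely quotes it as \cite[Theorem 11.4]{Apo}---and Apostol's proof there proceeds along the same lines (via the companion result that $N^{s}Z(s)\to a(N)$ as $\sigma\to\infty$), so there is nothing substantively different to compare. One cosmetic remark: your appeal to Lemma~\ref{lem:absciabco} for absolute convergence at $\sigma=2$ is unnecessary, since absolute convergence for $\sigma>1$ is already part of the hypothesis.
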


\begin{lemma}[{see \cite[Theorem 11.9]{Apo}}]\label{lem:abscico}
If the series $\sum_{n=1}^\infty a(n)n^{-s}$ does not converge everywhere or diverge everywhere, then there exists a real number $\sigma_c$, called the abscissa of convergence, such that the series converges for all $s$ in the half-plane  $\sigma >\sigma_c$ and diverges for all $\sigma < \sigma_c$.
\end{lemma}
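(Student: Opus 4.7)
The plan is to reduce Lemma \ref{lem:abscico} to one key implication: if the Dirichlet series $\sum_{n=1}^\infty a(n) n^{-s_0}$ converges at a single point $s_0$, then it converges at every $s$ with $\Re(s) > \Re(s_0)$. Once this monotonicity in the real part is established, defining $\sigma_c$ as the infimum of $\Re(s_0)$ over all points of convergence yields exactly the abscissa promised by the lemma, and the hypothesis that the series neither converges everywhere nor diverges everywhere forces $\sigma_c$ to be a finite real number.

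For the key implication, I would fix $s_0$ where the series converges, fix $s$ with $w := s - s_0$ having $\Re(w) > 0$, and apply Abel summation. Writing $T_N := \sum_{n=1}^N a(n) n^{-s_0}$, the sequence $\{T_N\}$ is bounded since it is convergent. One then computes for $1 \le M < N$
\begin{equation*}
\sum_{n=M}^N a(n)\, n^{-s} \;=\; \sum_{n=M}^N (T_n - T_{n-1})\, n^{-w} \;=\; T_N N^{-w} - T_{M-1} M^{-w} + \sum_{n=M}^{N-1} T_n \bigl(n^{-w} - (n+1)^{-w}\bigr).
\end{equation*}
The increments obey $|n^{-w} - (n+1)^{-w}| = \bigl| w \int_n^{n+1} x^{-w-1}\, dx \bigr| \le |w|\, n^{-\Re(w)-1}$, so the rightmost sum is bounded in absolute value by $C\,|w|\sum_{n \ge M} n^{-\Re(w)-1} = O(M^{-\Re(w)})$, while the boundary terms contribute $O(M^{-\Re(w)}) + O(N^{-\Re(w)})$. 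Therefore the partial sums of $\sum a(n) n^{-s}$ satisfy the Cauchy criterion, establishing convergence at $s$.

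To conclude, set $\sigma_c := \inf\{\sigma \in {\mathbb{R}} : \sum_n a(n)\, n^{-\sigma} \text{ converges}\}$; the two-sided hypothesis on the series places $\sigma_c$ strictly between $-\infty$ and $+\infty$. For any $s$ with $\Re(s) > \sigma_c$, choose a real $\sigma'$ with $\sigma_c < \sigma' < \Re(s)$ at which the series converges (available by definition of the infimum) and invoke the key implication to conclude convergence at $s$. Conversely, if the series converged at some $s$ with $\Re(s) < \sigma_c$, the key implication would force convergence at a real point $\sigma'' \in (\Re(s), \sigma_c)$, contradicting the definition of $\sigma_c$. Hence the series converges throughout $\Re(s) > \sigma_c$ and diverges for all real $\sigma < \sigma_c$, which is exactly the claim.

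The one step requiring actual analytic care is the Abel-summation estimate; it is not a serious obstacle but is the only genuinely computational ingredient, since the remainder of the argument is purely bookkeeping with the infimum defining $\sigma_c$.
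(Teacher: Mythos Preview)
Your argument is correct and is the standard Abel-summation proof of this classical fact. Note, however, that the paper does not actually give a proof of this lemma: it is one of the seven results explicitly quoted from Apostol's Chapter~11 without proof (see the opening sentence of Section~2.1), so there is no ``paper's own proof'' to compare against. Your write-up is essentially the argument Apostol himself gives (his Theorem~11.8 supplies the Abel-summation estimate you derived, and Theorem~11.9 then defines $\sigma_c$ as the infimum), so nothing is lost or gained methodologically. One tiny cosmetic point: when you pick $\sigma'$ with $\sigma_c < \sigma' < \Re(s)$, the infimum only guarantees $\sigma_c \le \sigma' < \Re(s)$; this does not affect the argument since the key implication still applies.
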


\begin{lemma}[{see \cite[Theorem 11.11]{Apo}}]\label{lem:d2}
A Dirichlet series $\sum_{n=1}^\infty a(n)n^{-s}$ converges uniformly on every compact subset lying interior to the half-plane of convergence $\sigma > \sigma_c$. 
\end{lemma}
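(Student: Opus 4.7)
The plan is to prove uniform Cauchy convergence of the partial sums of $\sum a(n) n^{-s}$ on an arbitrary compact $K \subset \{s : \sigma > \sigma_c\}$ by Abel's summation by parts, pivoting on the numerical convergence of the series at a single real point just to the left of $K$. First I would set $\sigma_0 := \inf_{s \in K} \mathrm{Re}(s)$, which is strictly greater than $\sigma_c$ by compactness of $K$ and openness of the half-plane, and fix a real number $s_1$ with $\sigma_c < s_1 < \sigma_0$. By Lemma \ref{lem:abscico} the series $\sum_{n=1}^\infty a(n) n^{-s_1}$ converges, so its partial sums $B_n := \sum_{k=1}^n a(k) k^{-s_1}$ form a Cauchy sequence in $\mathbb{C}$. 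Compactness also supplies the finite constants $C := \sup_{s \in K} |s - s_1|$ and $\delta := \sigma_0 - s_1 > 0$, both of which are independent of the point $s \in K$ at which we test convergence.

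The core algebraic step is to establish the identity
\begin{equation*}
\sum_{n=M}^N a(n) n^{-s} = (B_N - B_{M-1}) N^{-(s-s_1)} + \sum_{n=M}^{N-1} (B_n - B_{M-1}) \bigl[ n^{-(s-s_1)} - (n+1)^{-(s-s_1)} \bigr],
\end{equation*}
obtained by writing $a(n) n^{-s} = (B_n - B_{n-1}) n^{-(s-s_1)}$ and performing Abel summation so that every partial sum appears as a \emph{difference} $B_n - B_{M-1}$; the residual $B_{M-1}$ boundary contributions telescope and cancel exactly. This rearrangement is the delicate point of the proof: a naive partial summation leaves a lone $B_{M-1}$ boundary term which is merely bounded rather than small, and would therefore yield only boundedness of the partial sums, not the Cauchy estimate needed for uniform convergence.

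To close the argument I would bound each factor uniformly on $K$. From the integral representation $n^{-w} - (n+1)^{-w} = w \int_n^{n+1} u^{-w-1}\, du$ with $w = s - s_1$ one gets $|n^{-(s-s_1)} - (n+1)^{-(s-s_1)}| \le C\, n^{-\delta - 1}$, while $|N^{-(s-s_1)}| \le N^{-\delta} \le 1$. Combined with $|B_n - B_{M-1}| < \varepsilon$ for all $n \ge M - 1$ once $M$ exceeds a threshold $N_0(\varepsilon)$ coming from the Cauchy property of $\{B_n\}$, these estimates give
\begin{equation*}
\Bigl| \sum_{n=M}^N a(n) n^{-s} \Bigr| \le \varepsilon \Bigl( 1 + C \sum_{n=1}^\infty n^{-\delta - 1} \Bigr),
\end{equation*}
uniformly in $s \in K$ and in sufficiently large $N > M$. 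The constant in parentheses is finite because $\delta > 0$, so the partial sums satisfy the uniform Cauchy criterion on $K$, which is precisely the assertion of the lemma. The main obstacle is thus not the estimation itself but the bookkeeping in the Abel summation that ensures the clean cancellation of the $B_{M-1}$ boundary terms.
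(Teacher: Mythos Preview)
Your proof is correct. The paper does not supply its own proof of this lemma: it is quoted verbatim from Apostol's textbook (Theorem~11.14 there carries a proof, but Theorem~11.11, the present lemma, is simply cited along with the other six preliminary lemmas without reproduction). Your argument via Abel summation pivoted at a real point $s_1 \in (\sigma_c, \sigma_0)$, with the subtraction of $B_{M-1}$ to make every partial sum appearing in the identity a Cauchy difference, is exactly the classical proof and matches what one finds in Apostol.
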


\begin{lemma}[{see \cite[Theorem 11.13]{Apo}}]\label{lem:apo1}
Assume that $a(n) \ge 0$ for all $n \ge n_0$. If the series $Z(s) = \sum_{n=1}^\infty a(n) n^{-s}$ has a finite abscissa of convergence $\sigma_c$, then $Z(s)$ has a singularity on the real axis at the point $s=\sigma_c$.
\end{lemma}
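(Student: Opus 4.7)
My plan is the classical Landau-style argument by contradiction. Suppose, toward a contradiction, that $Z(s)$ extends analytically to a neighbourhood of $s = \sigma_c$. Combined with analyticity of $Z$ on the open half-plane $\{\sigma > \sigma_c\}$ (which follows from Lemmas \ref{lem:d1} and \ref{lem:d2}), this gives an open disk $|s - \sigma_c| < \eta$ on which $Z$ is analytic, for some $\eta > 0$. Then the Taylor series of $Z$ about the point $s_0 := \sigma_c + 1$ has radius of convergence strictly greater than $1$; fix $\delta \in (0, \eta)$ so that the whole disk $|s - s_0| < 1 + \delta$ lies inside the domain of analyticity.

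Next I would compute the Taylor coefficients explicitly. By Lemma \ref{lem:d2} the Dirichlet series can be differentiated term by term on any compact subset of $\{\sigma > \sigma_c\}$, and at $s_0$ this yields
\[
Z^{(k)}(s_0) = \sum_{n=1}^\infty a(n)(-\log n)^k n^{-s_0}, \qquad k \ge 0.
\]
Evaluating the Taylor series at the real point $s_1 := \sigma_c - \delta/2$, which satisfies $|s_1 - s_0| = 1 + \delta/2 < 1 + \delta$, I obtain
\[
Z(s_1) = \sum_{k=0}^\infty \frac{(1 + \delta/2)^k}{k!} \sum_{n=1}^\infty a(n)(\log n)^k n^{-s_0}.
\]

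The decisive step is to swap the two summations. Splitting each inner sum at $n_0$, the finite head $1 \le n < n_0$ can be interchanged with the outer $k$-sum trivially, while on the tail $n \ge n_0$ every summand is non-negative, so Tonelli's theorem permits the exchange and gives
\[
\sum_{n \ge n_0} \frac{a(n)}{n^{s_0}} \sum_{k=0}^\infty \frac{\bigl((1+\delta/2)\log n\bigr)^k}{k!} = \sum_{n \ge n_0} \frac{a(n)}{n^{s_0}} \, n^{1+\delta/2} = \sum_{n \ge n_0} \frac{a(n)}{n^{\sigma_c - \delta/2}} < \infty.
\]
Adding back the finite head, the full Dirichlet series converges at $s_1 = \sigma_c - \delta/2 < \sigma_c$, contradicting Lemma \ref{lem:abscico}. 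The main obstacle I anticipate is justifying the interchange of summations, since absolute convergence of the double series at $s_1$ is precisely what one wishes to deduce; the eventual non-negativity hypothesis is exactly what makes Tonelli applicable on the tail, and the finite exceptional head causes no trouble.
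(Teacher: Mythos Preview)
Your argument is correct and is precisely the classical Landau proof; the paper itself does not supply a proof of this lemma but merely quotes it from Apostol, and Apostol's Theorem~11.13 is proved by exactly the contradiction-via-Taylor-expansion-about-$\sigma_c+1$ that you outline. One cosmetic remark: the condition ``$\delta\in(0,\eta)$'' is not by itself sufficient to guarantee that the disk $|s-s_0|<1+\delta$ sits inside $\{\sigma>\sigma_c\}\cup\{|s-\sigma_c|<\eta\}$ (a short computation shows you need roughly $\delta<\eta^2/2$), but since you phrase it as ``fix $\delta$ so that'' the disk is contained in the domain, the existence of such a $\delta$ is all that is being asserted, and that is true.
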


\begin{lemma}[{see \cite[Theorem 11.14]{Apo}}]\label{lem:apo2}
Let $Z(s) = \sum_{n=1}^\infty a(n) n^{-s}$ converge absolutely when $\sigma >1$ and suppose that $a(1) \ne 0$. If $Z(s) \ne 0$ for $\sigma > \sigma_0 \ge 1$, then for $\sigma > \sigma_0$, 
\begin{equation}\label{eq:g1}
Z(s) = \exp \bigl( G(s) \bigr), \qquad
G(s) := \log a(1) + \sum_{n=2}^\infty \frac{A(n)}{\log n} n^{-s}, 
\end{equation}
where $A(n)$ is given by (\ref{eq:A}). Note that for complex $z \ne 0$, $\log z$ denotes that branch of the logarithm which is real when $z>0$ and the Dirichlet series $\sum_{n=2}^\infty A(n) n^{-s}/\log n$ converges uniformly on every compact subset in the half-plane $\sigma > \sigma_0$. Furthermore, the series for $G(s)$ converges absolutely when $\sigma > 1+\sigma_0$.
\end{lemma}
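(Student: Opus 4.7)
Plan. The approach is classical logarithmic differentiation followed by termwise integration, with careful separation of the two half-planes. Since $Z(s)$ is analytic and non-vanishing on $\sigma>\sigma_0$ and $Z(s)\to a(1)\ne0$ uniformly in $t$ as $\sigma\to+\infty$ (a consequence of absolute convergence for $\sigma>1$), one can fix the analytic branch $H(s):=\log Z(s)$ on $\sigma>\sigma_0$ by requiring $H(s)\to\log a(1)$; then $H'(s)=Z'(s)/Z(s)$, while termwise differentiation of the absolutely convergent series gives $Z'(s)=-\sum_{n\ge 1}a^\#(n)n^{-s}$ absolutely for $\sigma>1$.

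The heart of the proof is to identify $Z'/Z$ with $-\sum A(n)n^{-s}$ in a suitable region. Pick $\sigma_1$ so large that $\sum_{n\ge 2}|a(n)|n^{-\sigma_1}<|a(1)|/2$; on $\sigma\ge\sigma_1$ the Neumann expansion
\[
\frac{1}{Z(s)}=\frac{1}{a(1)}\sum_{k=0}^{\infty}\Bigl(1-\frac{Z(s)}{a(1)}\Bigr)^{k}
\]
converges absolutely and rearranges into an absolutely convergent Dirichlet series whose coefficients must equal $a^{-1}(n)$ by the uniqueness theorem for Dirichlet series. Cauchy-multiplying this with $Z'(s)$ yields $Z'(s)/Z(s)=-\sum_{n\ge 1}A(n)n^{-s}$ absolutely on $\sigma\ge\sigma_1$ (with $A(1)=a^\#(1)a^{-1}(1)=0$), and termwise integration from $+\infty$ down to $s$ produces
\[
H(s)-\log a(1)=\sum_{n=2}^{\infty}\frac{A(n)}{\log n}\,n^{-s},
\]
that is, $Z(s)=\exp G(s)$, on the initial region $\sigma\ge\sigma_1$.

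To extend to the stated regions, I would push absolute convergence from $\sigma\ge\sigma_1$ down to $\sigma>1+\sigma_0$ by applying the Hewitt--Williamson theorem to the shift $Z(\cdot+\sigma_0+\varepsilon)$ for arbitrarily small $\varepsilon>0$, obtaining $\sum_{n\ge 1}|a^{-1}(n)|n^{-(\sigma_0+\varepsilon)}<\infty$; Cauchy multiplication with $\sum|a^\#(n)|n^{-\sigma}$ then gives absolute convergence of $\sum|A(n)|n^{-\sigma}$ on $\sigma>1+\sigma_0+\varepsilon$, and letting $\varepsilon\downarrow 0$ delivers the stated region. Since $H(s)-\log a(1)$ is analytic on $\sigma>\sigma_0$ and agrees with $G(s)-\log a(1)$ on the overlap, the identity $Z(s)=\exp G(s)$ extends to $\sigma>\sigma_0$ by uniqueness of analytic continuation.

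The main obstacle is to upgrade this to uniform convergence of $\sum_{n\ge 2}A(n)n^{-s}/\log n$ on compact subsets of the \emph{full} half-plane $\sigma>\sigma_0$, not merely of $\sigma>1+\sigma_0$. Since the coefficients $A(n)/\log n$ are not assumed non-negative, Lemma \ref{lem:apo1} (Landau) cannot be invoked to force a singularity of $G$ at its abscissa of convergence; instead, one must combine the analytic continuation of $H(s)-\log a(1)$ with a partial-summation estimate on the coefficients $A(n)/\log n$ to prove pointwise convergence of the Dirichlet series at every $s$ with $\sigma>\sigma_0$, after which Lemma \ref{lem:d2} delivers the uniform convergence on compacts. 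This refinement is precisely the point at which Apostol's original proof of Theorem 11.14 breaks down and which Lemma \ref{lem:apo2} corrects.
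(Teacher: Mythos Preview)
Your outline is correct through the large-$\sigma$ region, but there is a genuine gap precisely at what you yourself call the ``main obstacle'': you never actually establish that $\sum_{n\ge2}\frac{A(n)}{\log n}\,n^{-s}$ converges for $\sigma_0<\sigma\le 1+\sigma_0$. Saying one should ``combine the analytic continuation of $H(s)-\log a(1)$ with a partial-summation estimate'' is not a proof; the analyticity of $H$ on $\sigma>\sigma_0$ does not by itself force the abscissa of convergence of its Dirichlet series down to $\sigma_0$, and no concrete partial-summation bound is supplied. Your appeal to Hewitt--Williamson has a related gap: that theorem requires $|Z(s)|\ge\delta>0$ uniformly on the half-plane, whereas the hypothesis only gives $Z(s)\ne 0$.

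The paper closes this gap with a different tool, Landau's Satz~12 (quoted here as Lemma~\ref{lem:Lan4}): if $f(s)=\sum\alpha_n n^{-s}$ converges and is nowhere zero on $\sigma>\alpha$, then the formal Dirichlet inverse $\sum\beta_n n^{-s}$ already \emph{converges} (conditionally) on $\sigma>\alpha$. Applied with $\alpha=\sigma_0$, this gives convergence of $\sum a^{-1}(n)n^{-s}$ on all of $\sigma>\sigma_0$; then Lemma~\ref{lem:conv1} (Dirichlet convolution of a convergent series with an absolutely convergent one) yields convergence of $\sum A(n)n^{-s}=-Z'(s)/Z(s)$ on $\sigma>\sigma_0$, and termwise integration plus Lemma~\ref{lem:d2} deliver the uniform convergence of $G$ on compacts. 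The absolute convergence on $\sigma>1+\sigma_0$ then drops out of Lemma~\ref{lem:apo1110} rather than Hewitt--Williamson. In short, the missing ingredient is Landau's theorem on convergence of the Dirichlet inverse, not a partial-summation estimate.
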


Next we show the three number theoretical lemmas below to prove Theorem \ref{th:0} and the Main theorem. First we show the following lemma.
\begin{lemma}\label{lem:abcon1}
Let $Z(s)$ defined by (\ref{eq:dergd1}) and suppose $a(n) = O(n^\varepsilon)$ for any $\varepsilon >0$. Then the Dirichlet series $Z(s)$ converges absolutely when $\sigma>1$. 
\end{lemma}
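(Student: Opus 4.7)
The plan is to reduce this to comparison with a convergent $p$-series. Fix any $\sigma > 1$ and set $\varepsilon := (\sigma - 1)/2 > 0$. By the hypothesis $a(n) = O(n^\varepsilon)$, there exists a constant $C = C(\varepsilon) > 0$ such that $|a(n)| \le C n^\varepsilon$ for all $n \ge 1$. Then for $s = \sigma + {\rm i}t$ with $\sigma$ as above,
\[
\sum_{n=1}^\infty \left| \frac{a(n)}{n^s} \right| = \sum_{n=1}^\infty \frac{|a(n)|}{n^\sigma} \le C \sum_{n=1}^\infty \frac{n^\varepsilon}{n^\sigma} = C \sum_{n=1}^\infty \frac{1}{n^{\sigma - \varepsilon}} = C \sum_{n=1}^\infty \frac{1}{n^{(\sigma+1)/2}}.
\]
Since $(\sigma + 1)/2 > 1$, the last series converges by the standard $p$-series criterion, which gives absolute convergence of $Z(s)$ at the chosen $\sigma$.

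Since $\sigma > 1$ was arbitrary, this proves absolute convergence throughout the half-plane $\sigma > 1$. There is no genuine obstacle here; the only minor subtlety is that the constant $C$ depends on $\varepsilon$ (and hence on $\sigma$), but this is harmless because absolute convergence is a pointwise statement at each fixed $\sigma$. If one wishes to record absolute convergence that is uniform on compact subsets of $\{\sigma > 1\}$ — which will be used together with Lemma \ref{lem:d1} and Lemma \ref{lem:d2} elsewhere in the paper — one picks, for a compact set $K \subset \{\sigma > 1\}$, a number $\sigma_K := \min_{s \in K} \Re(s) > 1$ and applies the same estimate with $\varepsilon = (\sigma_K - 1)/2$, obtaining a Weierstrass $M$-test bound $|a(n) n^{-s}| \le C n^{-(\sigma_K + 1)/2}$ valid on $K$. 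This completes the proof.
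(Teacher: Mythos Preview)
Your proof is correct and follows essentially the same approach as the paper: bound $|a(n)| \le C_\varepsilon n^\varepsilon$ and compare with the convergent series (the paper uses the integral $\int_1^\infty x^{-(\sigma-\varepsilon)}\,dx$ instead of the $p$-series, but this is the same idea). Your version is in fact slightly more explicit about the choice of $\varepsilon$ in terms of $\sigma$, which the paper leaves implicit.
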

\begin{proof}
By the assumption that $a(n) = O(n^\varepsilon)$ for any $\varepsilon >0$, one has
\[
|Z(s)| \le \sum_{n=1}^\infty \frac{|a(n)|}{n^\sigma} \le 
C_\varepsilon \int_1^\infty \!\! \frac{dx}{x^{\sigma-\varepsilon}},
\]
for some positive constant $C_\varepsilon$. The integral above converges absolutely when $\sigma >1$. 
\end{proof}

By using the lemma above, we have the following.
\begin{lemma}\label{lem:apo0a}
Let $a(1) > 0$ and $a(n) = O(n^\varepsilon)$. Then there exists a half-plane $\sigma > \gamma \ge 1$ in which $Z(s)$ is never zero.
\end{lemma}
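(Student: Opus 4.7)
The plan is to combine absolute convergence with a direct tail estimate. By Lemma~\ref{lem:abcon1}, the hypothesis $a(n) = O(n^\varepsilon)$ ensures that the Dirichlet series $Z(s)$ converges absolutely on the half-plane $\{\sigma > 1\}$, so I would begin by separating the leading term $a(1)$ and applying the triangle inequality to obtain
\[
|Z(s)| \ge a(1) - \sum_{n=2}^\infty \frac{|a(n)|}{n^\sigma}, \qquad \sigma > 1.
\]
Using the bound $|a(n)| \le C_\varepsilon n^\varepsilon$ with a fixed $\varepsilon \in (0,1/2)$, the tail is dominated by $C_\varepsilon \sum_{n=2}^\infty n^{\varepsilon - \sigma}$, which is precisely the quantity already controlled in the proof of Lemma~\ref{lem:abcon1}.

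The key step is then to observe that this tail tends to $0$ as $\sigma \to \infty$. Hence I can choose a real number $\gamma$ large enough — necessarily with $\gamma > 1$, so that the requirement $\gamma \ge 1$ is satisfied automatically — such that
\[
C_\varepsilon \sum_{n=2}^\infty n^{\varepsilon - \gamma} < \frac{a(1)}{2}.
\]
For every $s$ with $\Re(s) > \gamma$ this yields $|Z(s)| > a(1)/2 > 0$, producing the desired zero-free half-plane. Equivalently, the same estimate shows $Z(\sigma) \to a(1) > 0$ as $\sigma \to \infty$ along the real axis, so one may instead pick any sufficiently large real $\sigma_1$ where $Z(\sigma_1) \ne 0$ and quote Lemma~\ref{lem:apo0} to reach an identical conclusion.

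I do not anticipate any genuine obstacle. The positivity assumption $a(1) > 0$ gives the leading term a fixed positive size, while the sub-polynomial growth of the remaining coefficients makes the remainder uniformly small in any deep enough right half-plane. The only point requiring minor care is that $\gamma$ must exceed $1$ in order for the comparison tail $\sum_{n=2}^\infty n^{\varepsilon-\gamma}$ to converge, and this is automatic once $\gamma$ is taken large enough to make the bound below $a(1)/2$. No analytic continuation or oscillation argument is needed; the proof is entirely elementary.
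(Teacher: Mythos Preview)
Your proof is correct. Your primary route --- the direct tail estimate yielding $|Z(s)| > a(1)/2$ for all $\sigma$ beyond some $\gamma$ --- is slightly more elementary than the paper's argument, which instead uses dominated convergence to conclude $\lim_{\sigma\to\infty} Z(\sigma+{\rm i}t) = a(1) > 0$ and then invokes Lemma~\ref{lem:apo0} as a black box to obtain the zero-free half-plane. The alternative you mention at the end (find a single nonvanishing point and quote Lemma~\ref{lem:apo0}) is exactly what the paper does. In effect, your main argument bypasses Lemma~\ref{lem:apo0} by reproving its content in the special case $a(1)>0$; this buys self-containment and an explicit lower bound on $|Z(s)|$, while the paper's route is marginally shorter since it outsources that step to Apostol.
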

\begin{proof}
From Lemma \ref{lem:abcon1} and the assumption, the Dirichlet series $\sum_{n=1}^\infty a(n) n^{-s}$ converge absolutely when $\sigma >1$. Hence by Lebesgue's dominated convergence theorem, one has
\[
\lim_{\sigma \to \infty} \sum_{n=1}^\infty \frac{a(n)}{n^s} = 
a(1) + \sum_{n=2}^\infty \biggl( \lim_{\sigma \to \infty} \frac{a(n)}{n^s} \biggr) = a(1) + 0 >0
\]
since the summation above can be regarded as an integration on a discrete measure. Hence $Z(s) \ne 0$ for some $s \in {\mathbb{C}}$ with $\Re (s) >1$ from the inequality above. Therefore, we have Lemma \ref{lem:apo0a} by Lemma \ref{lem:apo0}. 
\end{proof}

The next lemma plays important role in the proof of the case (1) of Theorem \ref{th:p1}.
\begin{lemma}\label{lem:c1nt}
Let $a(n)$ satisfy the assumption (A) and suppose $A(n) \ge 0$ for all $n \in {\mathbb{N}}$. Then the series appeared in (\ref{eq:g1}) converges absolutely when $\sigma >1$. 
\end{lemma}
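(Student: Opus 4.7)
The plan is to apply Landau's theorem (Lemma~\ref{lem:apo1}) to the Dirichlet series $H(s) := \sum_{n=2}^{\infty} A(n) n^{-s}/\log n$ appearing in \eqref{eq:g1}, combined with the exponential representation from Lemma~\ref{lem:apo2}. Under the hypothesis $A(n) \ge 0$, the coefficients of $H$ are non-negative, so its abscissae of convergence and of absolute convergence coincide; call the common value $\sigma_c$. The claim is precisely $\sigma_c \le 1$. To set up, I would first use Lemma~\ref{lem:apo0a} under (A) to fix some $\gamma \ge 1$ on whose right half-plane $Z$ is zero-free, and then invoke Lemma~\ref{lem:apo2} with $\sigma_0 = \gamma$ to obtain the identity $Z(s) = a(1)\exp H(s)$ on $\sigma > \gamma$ together with convergence of $H$ there, yielding $\sigma_c \le \gamma < \infty$.

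Assume for contradiction that $\sigma_c > 1$. Then by Lemma~\ref{lem:apo1} the function $H$ has a singularity at the real point $s = \sigma_c$, and I would contradict this by constructing an analytic extension of $H$ across $\sigma_c$. The essential input is that $Z(\sigma_c) \ge a(1) > 0$, since $\sigma_c > 1$ and all $a(n) \ge 0$. By continuity of $Z$ on the half-plane $\sigma > 1$, there is an open disk $D \subset \{\sigma > 1\}$ centred at $\sigma_c$ on which $Z$ is non-vanishing, so that $h(s) := \log(Z(s)/a(1))$ is well-defined and analytic on $D$, taking the branch of $\log$ that is real at the positive number $Z(\sigma_c)/a(1)$.

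Finally, to obtain the analytic continuation I would match $h$ with $H$ on the overlap $D \cap \{\sigma > \sigma_c\}$. The identity $Z = a(1)\exp H$ propagates from $\sigma > \gamma$ to the connected half-plane $\sigma > \sigma_c$ by the identity theorem, since both sides are analytic on $\sigma > \sigma_c$ and agree on the non-empty open subset $\sigma > \gamma$. Hence on the connected set $D \cap \{\sigma > \sigma_c\}$ one has $\exp H = \exp h$, so that $H - h$ takes values in $2\pi i\,\mathbb{Z}$ and is therefore constant; evaluating at any real point of this set, where both $H$ and $h$ are real, forces that constant to vanish. Thus $h$ analytically extends $H$ to a neighbourhood of $\sigma_c$, contradicting Landau's theorem, so $\sigma_c \le 1$. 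The step I expect to be most delicate is precisely this branch-matching: the positivity of $Z$ on the real interval $(1, \infty)$ is indispensable, since it is what singles out the real branch of $\log Z$ consistent with the normalization in Lemma~\ref{lem:apo2}.
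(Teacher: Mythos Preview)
Your proof is correct and follows essentially the same approach as the paper's: assume the abscissa exceeds $1$, invoke Landau's theorem (Lemma~\ref{lem:apo1}) to get a singularity of $G$ on the real axis, and contradict this using the analyticity of $Z$ on $\sigma>1$ together with the relation $Z=\exp G$. The paper compresses your branch-matching argument into the single sentence ``Thus $G(s)$ and $Z(s)$ has a singularity at the point $s=\sigma_a$,'' whereas you spell out carefully why non-vanishing of $Z$ at $\sigma_c$ (guaranteed by $Z(\sigma_c)\ge a(1)>0$) allows a local analytic logarithm that agrees with $H$ on the real axis and hence extends it; this is exactly the content behind the paper's terse implication.
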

\begin{proof}
Suppose contrary that for $\sigma_a > 1$, the series $\sum_{n=2}^\infty (A(n)/\log n) n^{-s}$ converges absolutely when $\sigma > \sigma_a$ but does not converge absolutely when $\sigma < \sigma_a$. Obviously, one has $A(n)/\log n \ge 0$ for all $2 \le n \in {\mathbb{N}}$. Thus $G(s)$ and $Z(s)$ has a singularity at the point $s=\sigma_a$ form Lemma \ref{lem:apo1}. On the other hand, the Dirichlet series of $Z(s)$ converges absolutely when $\sigma >1$ by Lemma \ref{lem:abcon1}. Hence $Z(s)$ is analytic in the half-plane $\sigma >1$ by Lemmas \ref{lem:d1} and \ref{lem:d2}. This contradicts our assumption mentioned above that $Z(s)$ has a singularity at the point $s=\sigma_a>1$. 
\end{proof}

\subsection{Proof of Lemma \ref{lem:apo2}}
We quote the the following three lemmas in order to show Lemma \ref{lem:apo2}. Recall that $\sigma_a$ and $\sigma_c$ are given in Lemmas \ref{lem:absciabco} and \ref{lem:abscico}, respectively.

\begin{lemma}[{see \cite[Theorem 11.10]{Apo}}]\label{lem:apo1110}
For any Dirichlet series with finite $\sigma_c$, we have
\[
0 \le \sigma_a - \sigma_c \le 1 .
\]
\end{lemma}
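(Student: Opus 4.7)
The plan is to establish the two inequalities separately, neither of which is deep. The lower bound $\sigma_a \ge \sigma_c$ is essentially definitional: since absolute convergence of any numerical series implies its ordinary convergence, every point $s$ at which $\sum a(n) n^{-s}$ converges absolutely is in particular a point of convergence, so the open half-plane $\{\sigma > \sigma_a\}$ on which Lemma \ref{lem:absciabco} guarantees absolute convergence is contained in the open half-plane $\{\sigma > \sigma_c\}$ of ordinary convergence supplied by Lemma \ref{lem:abscico}. Comparing these half-planes forces $\sigma_a \ge \sigma_c$, i.e. $\sigma_a - \sigma_c \ge 0$.

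For the upper bound $\sigma_a \le \sigma_c + 1$, I would exploit the elementary fact that the terms of a convergent series tend to zero. Fix an arbitrary real $\sigma_1 > \sigma_c$; by definition of $\sigma_c$ the series $\sum_{n=1}^\infty a(n) n^{-\sigma_1}$ converges, so its general term satisfies $a(n) n^{-\sigma_1} \to 0$. In particular this sequence is bounded in modulus, say $|a(n) n^{-\sigma_1}| \le M$ for some constant $M > 0$, which yields the crude growth estimate $|a(n)| \le M n^{\sigma_1}$ valid for every $n \ge 1$.

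With this estimate in hand, for any real $\sigma > \sigma_1 + 1$ one gets
\[
\sum_{n=1}^\infty \frac{|a(n)|}{n^{\sigma}} \le M \sum_{n=1}^\infty \frac{1}{n^{\sigma - \sigma_1}} < \infty,
\]
since $\sigma - \sigma_1 > 1$, so $\sum a(n) n^{-s}$ converges absolutely at $s = \sigma$ and thus $\sigma_a \le \sigma_1 + 1$. Because $\sigma_1$ was an arbitrary real number strictly larger than $\sigma_c$, taking $\sigma_1 \downarrow \sigma_c$ gives $\sigma_a \le \sigma_c + 1$, completing the proof.

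There is no real obstacle here; the only point requiring mild care is that one must extract the bound $M$ at a real $\sigma_1 > \sigma_c$ rather than at $\sigma_c$ itself, since $\sigma_c$ is only a supremum of non-convergent abscissas and the series need not converge there. The flexibility of choosing $\sigma_1$ arbitrarily close to $\sigma_c$ is exactly what gives the sharp gap of $1$ in the conclusion.
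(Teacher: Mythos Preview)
Your argument is correct and is precisely the standard proof of this classical fact. The paper does not supply its own proof here: Lemma~\ref{lem:apo1110} is simply quoted from Apostol's textbook (Theorem~11.10) as one of three auxiliary lemmas used in the proof of Lemma~\ref{lem:apo2}, and Apostol's own proof proceeds in essentially the same way --- bounding $|a(n)|$ by $M n^{\sigma_1}$ from convergence at a point with real part $\sigma_1>\sigma_c$, then comparing with a convergent $p$-series for $\sigma>\sigma_1+1$ and letting $\sigma_1\downarrow\sigma_c$.
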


\begin{lemma}[{see the first note in \cite[p.~122]{Ten}}]\label{lem:conv1}
Let $a$ and $b$ be two arithmetic functions and assume that $\sum_{n=1}^\infty a_n$ converges, $\sum_{n=1}^\infty |b_n| < \infty$ and $c_n$ is given by (\ref{eq:defcn}). Then $\sum_{n=1}^\infty c_n$ converges to $(\sum_{n=1}^\infty a_n) (\sum_{n=1}^\infty b_n)$.
\end{lemma}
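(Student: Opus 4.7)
The plan is to adapt the classical Mertens-type argument for Cauchy products of series to the multiplicative convolution setting. First I would fix notation, setting $A_N := \sum_{n=1}^N a(n)$ (with $A_0 := 0$), $A := \sum_{n=1}^\infty a(n)$ and $B := \sum_{n=1}^\infty b(n)$. Since $\{A_N\}$ converges it is bounded, so I can pick a constant $C > 0$ with $|A_N| \le C$ for every $N \ge 0$. The hypothesis $\sum_n |b(n)| < \infty$ will supply the tail bound: for each $\varepsilon > 0$ there exists $M = M(\varepsilon)$ with $\sum_{\beta > M} |b(\beta)| < \varepsilon$.

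Next I would rewrite the partial sum of $\{c(n)\}$ by exchanging the order of summation. Using the divisor form of $c(n)$ in (\ref{eq:defcn}),
\[
S_N := \sum_{n=1}^N c(n) = \sum_{\alpha \beta \le N} a(\alpha) b(\beta) = \sum_{\beta=1}^N b(\beta)\, A_{\lfloor N/\beta \rfloor},
\]
and I would split at the cutoff $M$:
\[
S_N = \sum_{\beta=1}^M b(\beta)\, A_{\lfloor N/\beta \rfloor} + \sum_{\beta=M+1}^N b(\beta)\, A_{\lfloor N/\beta \rfloor}.
\]

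For fixed $M$, as $N \to \infty$ one has $\lfloor N/\beta \rfloor \to \infty$ for every $1 \le \beta \le M$, so $A_{\lfloor N/\beta \rfloor} \to A$; hence the first sum, being a finite sum in $\beta$, tends to $A \sum_{\beta=1}^M b(\beta)$. The second sum is bounded uniformly in $N$ by $C \sum_{\beta > M} |b(\beta)| < C \varepsilon$. Combined with $|AB - A \sum_{\beta=1}^M b(\beta)| \le |A| \varepsilon$, the triangle inequality then yields $\limsup_{N \to \infty} |S_N - AB| \le (C + |A|)\varepsilon$, and since $\varepsilon > 0$ is arbitrary the claim $S_N \to AB$ follows.

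The only delicate point I anticipate is the \emph{uniform}-in-$N$ control of the residual sum, whose range $\{M+1,\dots,N\}$ itself depends on $N$. This is precisely where the absolute (rather than merely conditional) convergence of $\sum b(n)$ is essential, combined with the uniform bound $|A_K| \le C$ inherited from convergence of $\sum a(n)$. The asymmetry of hypotheses between $a$ and $b$ cannot be weakened, exactly mirroring the situation of the classical Mertens theorem for Cauchy products.
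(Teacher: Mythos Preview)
Your argument is correct: the rearrangement $S_N=\sum_{\beta=1}^N b(\beta)\,A_{\lfloor N/\beta\rfloor}$ is exactly the Dirichlet analogue of Mertens' theorem, and the split at a fixed cutoff $M$ followed by the uniform bound $|A_K|\le C$ handles the $N$-dependent tail just as you say. Note, however, that the paper does not supply its own proof of this lemma; it merely quotes the statement from Tenenbaum's book (the first note on p.~122), where the same Mertens-type argument you give is the intended one, so there is nothing further to compare.
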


\begin{lemma}[{see \cite[Satz 12]{Lan}}]\label{lem:Lan4}
Suppose that $\alpha_1 \ne 0$, $f(s) = \sum_{n=1}^\infty \alpha_n n^{-s}$ converges and $f(s) \ne 0$ for $\sigma > \alpha$. Then the series $1/f(s) = \sum_{n=1}^\infty \beta_n n^{-s}$ converges for $\sigma > \alpha$.
\end{lemma}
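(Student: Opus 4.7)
The plan is to define the formal Dirichlet inverse coefficients $\{\beta_n\}$ via the recursion \eqref{eq:rec1}, verify that $g(s) := \sum_{n=1}^\infty \beta_n n^{-s}$ converges absolutely and agrees with $1/f(s)$ in a sufficiently deep right half-plane, and then argue by contradiction — using the product relation $f(s) g(s) = 1$ on a truncated level — to push the abscissa of convergence of $g$ down to the boundary $\sigma = \alpha$.

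First, since $\alpha_1 \ne 0$ the sequence $\{\beta_n\}$ is well-defined. By Lemma \ref{lem:apo1110} the abscissa of absolute convergence of $f(s)$ is at most $\alpha + 1$, so I choose a real $\sigma_1$ so large that $\sum_{n \ge 2} |\alpha_n| n^{-\sigma_1} < |\alpha_1|$. Writing $f(s) = \alpha_1 \bigl(1 + H(s)\bigr)$ with $H(s) := \alpha_1^{-1} \sum_{n \ge 2} \alpha_n n^{-s}$, the inequality $|H(s)| < 1$ on $\sigma \ge \sigma_1$ legitimises the geometric expansion $1/f(s) = \alpha_1^{-1} \sum_{k \ge 0} (-H(s))^k$. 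Rearranging the resulting absolutely convergent double series by the integer $n$ identifies the coefficient of $n^{-s}$ as exactly $\beta_n$. Thus $g(s)$ converges absolutely and equals $1/f(s)$ whenever $\sigma \ge \sigma_1$.

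Next, let $\sigma^\ast$ denote the abscissa of convergence of $g(s)$; the previous paragraph shows $\sigma^\ast \le \sigma_1 < \infty$. I want to conclude $\sigma^\ast \le \alpha$, and argue by contradiction. Suppose $\sigma^\ast > \alpha$, and fix a real $s_0$ with $\alpha < s_0 < \sigma^\ast$; by hypothesis $f(s_0)$ converges and $f(s_0) \ne 0$. Set $g_N(s) := \sum_{n \le N} \beta_n n^{-s}$ and multiply $f(s_0)$ by the finite sum $g_N(s_0)$. Using $(\alpha \ast \beta)(n) = I(n)$ for $n \le N$, one obtains
\[
f(s_0)\, g_N(s_0) \;=\; 1 \,+\, \sum_{n > N} \frac{r_n^{(N)}}{n^{s_0}}, \qquad
r_n^{(N)} \;:=\; \sum_{\substack{d \mid n \\ d \le N}} \beta_d\, \alpha_{n/d},
\]
provided the tail on the right converges. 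If one can show this tail tends to $0$ as $N \to \infty$, then $g_N(s_0) \to 1/f(s_0)$, which contradicts the divergence of $\sum \beta_n n^{-s_0}$ (guaranteed by $s_0 < \sigma^\ast$). This forces $\sigma^\ast \le \alpha$, proving the lemma.

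The main obstacle is the tail estimate when $\{\beta_n n^{-s_0}\}$ is not absolutely summable. Lemma \ref{lem:apo1} is unavailable because the $\beta_n$ need not have a fixed sign, so positivity-based singularity arguments cannot be invoked. Instead I would split the inner sum defining $r_n^{(N)}$ according to the size of the divisor $d$ (say, $d \le \sqrt{N}$ versus $d > \sqrt{N}$) and apply Abel summation in $n$, converting the (non-absolute) convergence of $f(s_0) = \sum \alpha_n n^{-s_0}$ into boundedness of its partial sums and combining this with the behaviour of the partial sums of $g$ at points just to the right of $\sigma^\ast$. This delicate bookkeeping — turning convergence of $f$ at $s_0$ into a vanishing tail estimate without absolute convergence at hand — is the heart of Landau's argument and the main technical hurdle in the proof.
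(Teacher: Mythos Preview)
The paper does not prove this lemma; it is quoted from Landau~\cite[Satz~12]{Lan} (with pointers to \cite[Theorem~2]{BG} and \cite[p.~127]{Ten}), so there is no in-paper argument to compare against. Your setup is correct: the formal inverse $\{\beta_n\}$ exists, $g(s)=\sum\beta_n n^{-s}$ converges absolutely and equals $1/f$ far to the right, and the rearrangement yielding
\[
f(s_0)\,g_N(s_0)=1+T_N,\qquad T_N=\sum_{d\le N}\beta_d\,d^{-s_0}\,R_f(N/d),\quad R_f(y):=\sum_{m>y}\alpha_m m^{-s_0},
\]
is legitimate since only finitely many $d$ occur. The gap is exactly where you yourself locate it: you never prove $T_N\to 0$, and your final paragraph is a plan rather than an argument.

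There is moreover a structural obstacle to that plan. Since $g_N(s_0)=(1+T_N)/f(s_0)$, showing $T_N\to 0$ is \emph{equivalent} to showing $g_N(s_0)$ converges, so any estimate of $T_N$ that uses only data at the single point $s_0$ is circular: Abel summation in $d$ produces the partial sums $g_d(s_0)$ themselves as weights, and the contribution from $d$ near $N$ (where $R_f(N/d)$ is \emph{not} small) carries the full, presumptively unbounded, partial sum. A divisor-size split does not break this loop without quantitative decay of $R_f$ that mere convergence of $f(s_0)$ does not supply. Landau's proof injects outside information --- growth bounds on summatory functions combined with identities of hyperbola type --- and your allusion to ``partial sums of $g$ just to the right of $\sigma^\ast$'' gestures in that direction, but as written the argument is incomplete.
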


\begin{proof}[Proof of Lemma \ref{lem:apo2}]
The series $\sum_{n=1}^\infty a^{-1}(n) n^{-s}$ converges if $\sigma > \sigma _0$ from Lemma \ref{lem:Lan4} (see also \cite[Theorem 2]{BG} or the final remark in \cite[p.~127]{Ten}). Hence, the Dirichlet series $\sum_{n=1}^\infty a^{-1}(n) n^{-s}$ converges absolutely when $\sigma > 1+\sigma _0$ according to Lemma \ref{lem:apo1110}.

We can write $Z(s) = \exp (G(s))$ for some function $G(s)$ which is analytic for $\sigma > \sigma_0$ since $G(s) \ne 0$ in this half-plane. Then we have
\[
Z'(s) = G'(s) \exp \bigl( G(s) \bigr) = G'(s) Z(s),
\]
so $G'(s) = Z'(s) / Z(s)$. On the other hand, one has
\[
Z'(s) =  - \sum_{n=1}^\infty \frac{a(n) \log n}{n^s}, \qquad 
\frac{1}{Z(s)} = \sum_{n=1}^\infty \frac{a^{-1}(n)}{n^s}. 
\]
Note that the Dirichlet series of $Z'(s)$ converges absolutely when $\sigma> \sigma _0 \ge \sigma _a =1$ since one has $\log n = O(n^\varepsilon)$. Thus it holds that
\begin{equation}\label{eq:G}
G'(s) = \frac{Z'(s)}{Z(s)} = - \sum_{n=2}^\infty \frac{(a^\#*a^{-1}) (n)}{n^s}, 
\end{equation}
where $a^\#(n) = a(n) \log n$. The Dirichlet series above converges when $\sigma > \sigma _0$ from Lemma \ref{lem:conv1}. Integration gives
\[
G(s) = C + \sum_{n=2}^\infty \frac{A(n)}{\log n} n^{-s}, 
\]
where $C$ is a constant and $A(n) := (a^\#*a^{-1}) (n)$ is defined by (\ref{eq:A}). This termwise integration is justified by Lemma \ref{lem:d2}. Thus, the Dirichlet series of $G(s)$ converges uniformly on every compact subset in the half-plane $\sigma > \sigma_0$ from Lemma \ref{lem:d2}. Furthermore, the series of $G(s)$ converges absolutely when $\sigma > 1+ \sigma_0$ by Lemma \ref{lem:apo1110}. Letting $\sigma \to \infty$ we can find $G(\sigma+ {\rm{i}}t) \to C$ hence we have
\[
a(1) = \lim_{\sigma \to \infty} Z (\sigma+ {\rm{i}}t) = e^C
\]
so $C = \log a(1)$. This completes the proof. 
\end{proof}

\begin{remark}
In the proof above of Lemma \ref{lem:apo2}, we show that the series $\sum_{n=1}^\infty a^{-1}(n) n^{-s}$ converges when $\Re (s) > \sigma _0$ and converges absolutely if $\Re (s) > 1+\sigma _0$. In the proof of \cite[Theorem 11.14]{Apo}, it seems that the author of \cite{Apo} believes the series $\sum_{n=1}^\infty a^{-1}(n) n^{-s}$ converges absolutely when $\Re (s) > \sigma _0$ (see the last sentence of the proof of \cite[Theorem 11.14]{Apo}). However, this statement seems to be too stronger than the fact proved by Hewitt and Williamson (see the Remark in Section 1.3) and the following Landau-Schnee theorem (see \cite[Theorems 1 and 2]{BG}, \cite[S\"atze 5 and 12]{Lan} or \cite[Theorem 4.5.2]{QQ}). Assume that $f(s) = \sum_{n=1}^\infty \alpha_n n^{-s}$ with convergence and non-vanishing of $f$ for $1 < \sigma \le \infty$, so that in particular $\alpha_1 \ne 0$ and $\alpha_n = O (n^{1+\varepsilon})$, for any $\varepsilon >0$. Then it holds that
\[
\sigma >1 \Longrightarrow \frac{1}{f(s)} = \sum_{n=1}^\infty \beta_n n^{-s} \quad \mbox{with} \quad
\beta_n = O (n^{1+\varepsilon}), \quad \forall \varepsilon >0.
\]
\end{remark}

\section{Proofs of theorems}

\subsection{Proofs of Theorems \ref{th:0} and \ref{th:EV}}
Let $a(n) \ge 0$ for all $n \in {\mathbb{N}}$, $a(n) \ne 0 $ for some $n \in {\mathbb{N}}$ and $a(n) = O(n^\varepsilon)$ for any $\varepsilon >0$. Then we define a generalized Dirichlet $L$ random variable $X_{\sigma}$ with probability distribution on ${\mathbb{R}}$ given by
\begin{equation}\label{eq:defpr1}
{\rm{Pr}} \bigl(X_{\sigma}= - \log n \bigr)= \frac{a(n)n^{-\sigma}}{Z(\sigma)}.
\end{equation}
It should be noted that the distribution above is  a special case of multidimensional Shintani zeta distribution defined by Aoyama and Nakamura \cite{AN12s} (see also \cite[Definition 2.1]{Nakamura12}). It is easy to see that these distributions are probability distributions since one has $a(n)n^{-\sigma}/Z(\sigma) \ge 0$ for all $n \in {\mathbb{N}}$ and 
\[
\sum_{n=1}^{\infty} \frac{a(n)n^{-\sigma}}{Z(\sigma)} = \frac{1}{Z(\sigma)} 
\sum_{n=1}^\infty \frac{a(n)}{n^{\sigma}}=\frac{Z(\sigma)}{Z(\sigma)}=1 .
\]
\begin{lemma}\label{lem:1}
Let $X_{\sigma}$ be a generalized Dirichlet $L$ random variable. Then its characteristic function ${\mathcal{Z}}_{\sigma}$ is given by
\begin{equation}\label{eq:cf1}
{\mathcal{Z}}_\sigma (t) := \frac{Z(\sigma+{\rm{i}}t)}{Z(\sigma)}, \qquad t \in {\mathbb{R}}.
\end{equation}
\end{lemma}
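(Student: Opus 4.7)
The plan is to compute the characteristic function directly from its definition as the expectation of $e^{{\rm i}tX_\sigma}$, using the discrete probability distribution in (\ref{eq:defpr1}). Since $X_\sigma$ takes the value $-\log n$ with probability $a(n)n^{-\sigma}/Z(\sigma)$, I would write
\[
{\mathcal{Z}}_\sigma(t) = {\mathbb{E}}\bigl[e^{{\rm i}tX_\sigma}\bigr] = \sum_{n=1}^\infty e^{-{\rm i}t\log n}\,\frac{a(n)n^{-\sigma}}{Z(\sigma)},
\]
then apply the identity $e^{-{\rm i}t\log n} = n^{-{\rm i}t}$ to fold the exponential into the Dirichlet term, obtaining the summand $a(n)n^{-\sigma-{\rm i}t}/Z(\sigma)$.

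Next I would factor $1/Z(\sigma)$ out of the sum and recognize the remaining series as $Z(\sigma+{\rm i}t)$ by the defining formula (\ref{eq:dergd1}). The only point that truly requires justification is the interchange of expectation and summation, i.e.\ the convergence of $\sum_{n=1}^\infty |a(n)n^{-\sigma-{\rm i}t}|$. For this I would invoke Lemma \ref{lem:abcon1}, which under the growth condition $a(n) = O(n^\varepsilon)$ guarantees absolute convergence of $Z(s)$ on the half-plane $\sigma>1$; absolute convergence makes the rearrangement legitimate and also shows $Z(\sigma)>0$ (by the argument at the end of Lemma \ref{lem:apo0a}), so the denominator does not vanish.

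There is no real obstacle here, since the statement is essentially a restatement of the Dirichlet series with an imaginary shift in the exponent, and the proof is a one-line calculation once absolute convergence is in place. I would conclude by remarking that $|{\mathcal{Z}}_\sigma(t)|\le 1$ and ${\mathcal{Z}}_\sigma(0)=1$ hold automatically from the computation, confirming that ${\mathcal{Z}}_\sigma$ is indeed a bona fide characteristic function of the distribution defined by (\ref{eq:defpr1}).
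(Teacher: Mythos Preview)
Your proposal is correct and follows essentially the same route as the paper: compute ${\mathbb{E}}[e^{{\rm i}tX_\sigma}]$ from the discrete law (\ref{eq:defpr1}), use $e^{-{\rm i}t\log n}=n^{-{\rm i}t}$, and invoke Lemma~\ref{lem:abcon1} for absolute convergence. The only superfluous detail is your appeal to Lemma~\ref{lem:apo0a} for $Z(\sigma)>0$; under the standing hypotheses $a(n)\ge 0$ with some $a(n)\ne 0$, positivity of $Z(\sigma)$ is immediate.
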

\begin{proof}
Form the definition, we have, for any $t \in {\mathbb{R}}$, 
\[
{\mathcal{Z}}_{\sigma}(t) = \sum_{n=1}^\infty e^{{\rm i}t (- \log n)} \frac{a(n)n^{-\sigma}}{Z(\sigma)} =
\frac{1}{Z(\sigma)} \sum_{n=1}^\infty \frac{e^{-{\rm i}t \log n}a(n)}{n^{\sigma}} =
\frac{1}{Z(\sigma)} \sum_{n=1}^\infty  \frac{a(n)}{n^{\sigma+{\rm{i}}t}} =
\frac{Z(\sigma +{\rm i}t)}{Z(\sigma)}.
\]
Note that the series $\sum_{n=1}^\infty a(n) n^{-\sigma}$ converges absolutely when $\sigma >1$ by Lemma \ref{lem:abcon1}. The equality above implies the lemma.
\end{proof}
Now we only have to show the following lemma (see also \cite[Lemma 2.3]{Nakamura12}) in order to prove Theorem \ref{th:0}.
\begin{lemma}\label{lem:cf2}
Assume that there exists $m \in {\mathbb{N}}$ such that $a(m)<0$. Then the function ${\mathcal{Z}}_{\sigma}(t)$ is not a characteristic function. 
\end{lemma}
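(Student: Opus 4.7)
The plan is to assume for contradiction that ${\mathcal{Z}}_\sigma$ is a characteristic function at the given $\sigma>1$, identify the underlying probability measure explicitly as a purely atomic measure on $\{-\log n : n \in {\mathbb{N}}\}$, and then read off a sign contradiction from its atoms. First I would write
\[
{\mathcal{Z}}_\sigma(t) = \sum_{n=1}^\infty c_n e^{-{\rm{i}}t \log n}, \qquad c_n := \frac{a(n)}{Z(\sigma)\, n^\sigma},
\]
where the termwise representation is valid because the Dirichlet series converges absolutely by Lemma \ref{lem:abcon1}; in particular $\sum_n |c_n| < \infty$ and $\sum_n c_n = {\mathcal{Z}}_\sigma(0) = 1$. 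This exhibits ${\mathcal{Z}}_\sigma$ as the Fourier transform of the finite signed measure $\nu := \sum_{n=1}^\infty c_n \delta_{-\log n}$.

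Next I would invoke Fourier uniqueness: if ${\mathcal{Z}}_\sigma = \widehat{\mu}$ for some probability measure $\mu$, then $\widehat{\mu} = \widehat{\nu}$ as bounded continuous functions on ${\mathbb{R}}$, and injectivity of the Fourier transform on finite signed Borel measures forces $\mu = \nu$. A self-contained substitute, which I would probably spell out in the write-up, is Wiener's atomic formula
\[
\mu(\{\lambda\}) = \lim_{T \to \infty} \frac{1}{2T} \int_{-T}^T e^{-{\rm{i}}\lambda t}\,\widehat{\mu}(t)\,dt;
\]
inserting the absolutely convergent series for ${\mathcal{Z}}_\sigma$ into the right-hand side and interchanging sum and integral by dominated convergence yields $\mu(\{-\log n\}) = c_n$ for every $n \in {\mathbb{N}}$ and $\mu(\{\lambda\}) = 0$ otherwise, since the exponents $-\log n$ are pairwise distinct. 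Combined with $\sum_n c_n = 1 = \mu({\mathbb{R}})$, this shows that $\mu$ is purely atomic, supported on $\{-\log n : n \ge 1\}$, with weights $c_n$.

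Finally, positivity of $\mu$ forces $c_n \ge 0$ for every $n$. Applying this to $n = 1$ together with the standing hypothesis $a(1) > 0$ gives $Z(\sigma) > 0$, whence the offending index satisfies $c_m = a(m)/(Z(\sigma)\, m^\sigma) < 0$ because $a(m) < 0$, contradicting $c_m \ge 0$.

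The one substantive obstacle is the Fourier-uniqueness/Wiener step; everything else is mechanical. The key advantage of this route is that the identification $\mu = \nu$ holds at the single given $\sigma$, requiring no asymptotic uniformity in $\sigma$, which is exactly what the lemma (and hence the ``only if'' direction of Theorem \ref{th:0}) demands.
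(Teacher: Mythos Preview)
Your approach is essentially the same as the paper's: both identify $\mathcal{Z}_\sigma$ as the Fourier transform of the finite signed measure $\sum_n c_n\,\delta_{-\log n}$ (finite total variation by Lemma~\ref{lem:abcon1}) and then invoke injectivity of the Fourier transform on finite signed Borel measures to force a sign contradiction. The only cosmetic difference is that the paper disposes of the degenerate case $Z(\sigma)=0$ explicitly at the outset, whereas your contradiction framing absorbs it implicitly (if $\mathcal{Z}_\sigma$ is a characteristic function then in particular $Z(\sigma)\neq 0$); your Wiener-formula alternative is a nice self-contained substitute for the uniqueness citation.
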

\begin{proof}
First suppose $Z(\sigma) =0$. In this case, the function ${\mathcal{Z}}_{\sigma}(t) = Z(\sigma +{\rm i}t)/Z(\sigma)$ is not a characteristic function. 

Assume $Z(\sigma) \ne 0$. Let ${\mathbb{N}}_+$ be the set of integers $n$ such that $a(n)\ge 0$ and ${\mathbb{N}}_-$ be the set of integers $m$ such that $a(m)< 0$, respectively. From the view of (\ref{eq:defpr1}), we have 
\begin{equation*}
\begin{split}
& \frac{Z(\sigma +{\rm i}t)}{Z(\sigma)} = \frac{1}{Z(\sigma)} \sum_{n=1}^\infty \frac{a(n)}{n^{\sigma+{\rm i}t}}= \frac{1}{Z(\sigma)} \sum_{m \in {\mathbb{N}}_-} \frac{a(m)}{m^{\sigma+{\rm i}t}} + 
\frac{1}{Z(\sigma)} \sum_{n \in {\mathbb{N}}_+} \frac{a(n)}{n^{\sigma+{\rm i}t}} \\  = &
\frac{1}{Z(\sigma)} \sum_{m \in {\mathbb{N}}_-} \frac{a(m)}{m^{\sigma}}
\int_{\mathbb{R}} e^{{\rm i}tx} \delta_{- \log m} (dx) + 
\frac{1}{Z(\sigma)} \sum_{n \in {\mathbb{N}}_+} \frac{a(n)}{n^{\sigma}}
\int_{\mathbb{R}} e^{{\rm i}tx} \delta_{- \log n} (dx) .
\end{split}
\end{equation*}
From the definition of ${\mathbb{N}}_+$ and ${\mathbb{N}}_-$, 
\begin{equation}
\frac{1}{Z(\sigma)} \sum_{m \in {\mathbb{N}}_-} \frac{a(m)}{m^{\sigma}} \delta_{- \log m} + 
\frac{1}{Z(\sigma)} \sum_{n \in {\mathbb{N}}_+} \frac{a(n)}{n^{\sigma}}
\delta_{- \log n} 
\label{me:1}
\end{equation}
is not a measure but a signed measure. Moreover, we have
\[
\frac{1}{|Z(\sigma)|} \int_{\mathbb{R}}
\sum_{n=1}^\infty \left| \frac{a(n)}{n^{\sigma}} \right| \delta_{- \log n} (dx) = 
\frac{1}{|Z(\sigma)|} \sum_{n=1}^\infty \frac{|a(n)|}{n^{\sigma}} < \infty
\]
by Lemma \ref{lem:abcon1}. Hence the signed measure (\ref{me:1}) has finite total variation. It is well-known that any signed measure with finite total variation is uniquely determined by the Fourier transform. Therefore, $Z(\sigma +{\rm i}t)/Z(\sigma)$ is not a characteristic function.
\end{proof}

\begin{proof}[Proof Theorem \ref{th:EV}]
We can easily see that
\[
Z' (\sigma) = - \sum_{n=2}^\infty \frac{a(n) \log n}{n^\sigma}, \qquad Z'' (\sigma) = \sum_{n=2}^\infty \frac{a(n) \log^2 n}{n^\sigma}.
\]
Hence, we obtain
\begin{equation*}
\begin{split}
&{\mathbb{E}} [X_\sigma] =  \frac{1}{Z(\sigma)} \sum_{n=2}^\infty (-\log n) \frac{a(n)}{n^\sigma} = \frac{Z'(\sigma)}{Z(\sigma)}, \\
&{\rm{Var}} [X_\sigma] = {\mathbb{E}} [X_\sigma^2] - \bigl( {\mathbb{E}} [X_\sigma] \bigr)^2 = 
\frac{1}{Z(\sigma)} \sum_{n=2}^\infty \frac{a(n) \log^2 n}{n^{\sigma}} - \Bigl( \frac{Z'(\sigma)}{Z(\sigma)} \Bigr)^2 \\
&= \frac{Z''(\sigma)}{Z(\sigma)} - \Bigl( \frac{Z'(\sigma)}{Z(\sigma)} \Bigr)^2 = \frac{Z''(\sigma)Z(\sigma) - Z'(\sigma)^2}{Z^2(\sigma)}
= \frac{d}{d\sigma} \frac{Z'(\sigma)}{Z(\sigma)} .
\end{split}
\end{equation*}
Therefore, we have (\ref{eq:EV1}) by (\ref{eq:A}) and (\ref{eq:G}) when $Z(s) \ne 0$ for $\sigma > \sigma_0$.
\end{proof}

\subsection{The case (2-2) of Main Theorem}
In this subsection, we suppose the assumption (A) and $A(n) \ge 0$ for all $n \in {\mathbb{N}}$.
\begin{proof}[Proof of Main Theorem (2-2)]
By the assumption (A), Lemma \ref{lem:abcon1} and Theorem \ref{th:0}, ${\mathcal{Z}}_\sigma (t)$ defined by (\ref{eq:Zsdef1}) is a characteristic function for all $\sigma >1$. From Lemma \ref{lem:apo0}, there exists a half-plane $\sigma > \gamma \ge 1$ in which $Z(s)$ does not vanish. Hence for $\sigma > \gamma$, we have
\[
{\mathcal{Z}}_\sigma (t) = \frac{Z(\sigma+{\rm{i}}t)}{Z(\sigma)} =
\exp \Biggl( \sum_{n=2}^\infty \frac{A(n)}{n^{\sigma} \log n} \bigl( n^{-{\rm{i}}t} -1 \bigr) \Biggr) 
\]
according to Lemma \ref{lem:apo2}. Moreover, the Dirichlet series in the formula above converges absolutely when $\sigma >1$ by Lemma \ref{lem:c1nt} and the assumption $A(n) \ge 0$ for all $n \in {\mathbb{N}}$. Therefore, for all $\sigma >1$, it holds that
\begin{equation}\label{eq:zp1}
{\mathcal{Z}}_\sigma (t) =
\exp \Biggl( \sum_{n=2}^\infty \frac{A(n)}{n^{\sigma} \log n} \bigl( n^{-{\rm{i}}t} -1 \bigr) \Biggr) =
\exp \biggl[ \int_{\mathbb{R}} \bigl( e^{{\rm{i}}tx} -1 \bigr) N_{\sigma}(dx) \biggr],
\end{equation}
where $N_{\sigma}(dx)$ is defined by (\ref{eq:Levym1}). In addition, we have
\[
N_{\sigma}({\mathbb{R}}) \le 
\int_{\mathbb{R}} \sum_{n=2}^\infty \frac{A(n)}{n^{\sigma} \log n} \delta_{-\log n} (dx)
= \sum_{n=2}^\infty \frac{A(n)}{n^{\sigma} \log n} < \infty.
\]
Hence ${\mathcal{Z}}_\sigma (t)$ is a compound Poisson characteristic function with a finite L\'evy measure expressed as (\ref{eq:Levym1}) for all $\sigma >1$. 
\end{proof}

We have the following examples satisfy the assumption (A) and $A(n) \ge 0$ for all $n \in {\mathbb{N}}$.
\begin{example}[the Riemann zeta distribution, see Proposition {\ref{pro:RD}}]
Let $a(n)=1$ for all $n \in {\mathbb{N}}$. Then it holds that
\[
\frac{A(n)}{\log n} = 
\begin{cases}
1/r & \exists p \in {\mathbb{P}}, \,\, \exists r \in {\mathbb{N}} \,\mbox{ s.t. } n=p^r,\\
0 & \mbox{otherwise},
\end{cases}
\]
where ${\mathbb{P}}$ is the set of prime numbers.
\end{example}

\begin{example}[see {\cite[Theorem 2]{Lin}}]
Let $a(1)=1$, $a(n) = O(n^\varepsilon)$ and $a(n) \ge 0$ be completely multiplicative for all $n \in {\mathbb{N}}$. Then one has
\[
\frac{A(n)}{\log n} = 
\begin{cases}
a(p)^r/r & \exists p \in {\mathbb{P}}, \,\, \exists r \in {\mathbb{N}} \,\mbox{ s.t. } n=p^r,\\
0 & \mbox{otherwise},
\end{cases}
\]
(see also \cite[Theorem 4]{Lin}). For instance, we can take $a(n)=n^\alpha$, $\alpha \le 0$. 
\end{example}

\begin{example}
Let $a(n) = d_k (n)$, where $d_k(n)$, $k=2,3,4,\ldots$ denotes the number of ways of expressing $n$ as a product of $k$ factors, expression with the same factors in a different order being counted as different. Then we have
\[
\frac{A(n)}{\log n} = 
\begin{cases}
k/r & \exists p \in {\mathbb{P}}, \,\, \exists r \in {\mathbb{N}} \,\mbox{ s.t. } n=p^r,\\
0 & \mbox{otherwise}.
\end{cases}
\]
It is well-know that one has $d_k(n)= O (n^\varepsilon)$ (see for example \cite[(1.70)]{Ivic}). 
\end{example}

\subsection{The case (2-1) of Main Theorem}
In this subsection, suppose the assumption (A), $A(n) < 0$ for some $n \in {\mathbb{N}}$ and $Z(s)$ does not vanish for all $\sigma >1$. 
\begin{proof}[Proof of Main Theorem (2-1)]
From the assumptions above and Lemma \ref{lem:apo2}, we have (\ref{eq:zp1}) for all $\sigma >1$. Hence the function ${\mathcal{Z}}_\sigma (t)$ defined by (\ref{eq:Zsdef1}) is a not infinitely divisible but pretended infinitely divisible characteristic function if $\sigma >1$. 

Furthermore, the series $\sum_{n=2}^\infty A(n) n^{-\sigma} / \log n$ converges absolutely when $\sigma >2$ by Lemma \ref{lem:apo2} and the assumption that $Z(s)$ does not vanish for all $\sigma >1$. Therefore, it holds that
\begin{equation}\label{eq:clmf}
|N_{\sigma}|({\mathbb{R}}) \le 
\int_{\mathbb{R}} \sum_{n=2}^\infty \frac{|A(n)|}{n^{\sigma} \log n} \delta_{-\log n} (dx)
= \sum_{n=2}^\infty \frac{|A(n)|}{n^{\sigma} \log n} < \infty
\end{equation}
when $\sigma >2$. Thus the quasi L\'evy measure expressed as (\ref{eq:Levym1}) is finite for all $\sigma >2$. In addition, ${\mathcal{Z}}_\sigma (t)$ is not infinitely but quasi infinitely divisible for all $\sigma >2$ by (\ref{eq:zp1}) and the assumption $A(n) < 0$ for some $n \in {\mathbb{N}}$.
\end{proof}

\begin{example}[see {\cite[Theorem 1.2]{Nakamura12}}]
Let $a(n)=1$ for $n=1,q$, where $q$ is a natural number and $a(n)=0$ otherwise. The function $1+q^{-s}$ does not vanish for all $\sigma >1$ since one has $|q^{-s}| <1$. By using 
\[
\log \bigl( 1 + q^{-s} \bigr) = \sum_{r=1}^\infty \frac{(-1)^{r-1}}{rq^{rs}},
\]
we can see that
\begin{equation}\label{qualevi:1}
\frac{A(n)}{\log n} = 
\begin{cases}
(-1)^{r-1}/r & \exists r \in {\mathbb{N}} \,\, \mbox{ s.t. } n= q^r,\\
0 & \mbox{otherwise}.
\end{cases}
\end{equation}
Obviously the series $\sum_{k=1}^\infty (-1)^{k-1} k^{-1} q^{-ks}$ converges absolutely when $\sigma >1$. 
\end{example}

\begin{example}
Let $a(n)=\mu (n)$, where $\mu(1)=1$, $\mu(n)=(-1)^k$ if $n$ is the product of $k$ different primes and $\mu(n)=0$ if $n$ contains any factor to a power higher than the first. Then it is known (see {\cite[(1.2.7)]{Tit}}) that
\[
\frac{\zeta (s)}{\zeta (2s)} = \sum_{n=1}^\infty \frac{|\mu(n)|}{n^s}, \qquad \sigma >1.
\]
Meanwhile, it holds that
\[
\frac{\zeta (s)}{\zeta (2s)} = \prod_p \frac{1-p^{-2s}}{1-p^{-s}} =
\prod_p \frac{(1-p^{-s})(1+p^{-s})}{1-p^{-s}} = \prod_p \bigl( 1+p^{-s} \bigr). 
\]
From (\ref{qualevi:1}) and the formula above, one has
\[
\frac{A(n)}{\log n} = 
\begin{cases}
(-1)^{r-1}/r & \exists p \in {\mathbb{P}}, \,\, \exists r \in {\mathbb{N}} \,\, \mbox{ s.t. } n= p^r,\\
0 & \mbox{otherwise}.
\end{cases}
\]
\end{example}
The function $\zeta (s)/\zeta (2s) \ne 0$ when $\sigma >1$ according to the Euler product of $\zeta (s)$. Furthermore, one has
\[
\sum_p \sum_{r=1}^\infty \biggl| \frac{(-1)^{r-1}}{r} p^{-rs} \biggr| \le 
\sum_p \sum_{r=1}^\infty \biggl| p^{-rs} \biggr| \le \sum_{n=1}^\infty n^{-\sigma} <\infty
\]
for $\sigma >1$. Thus the corresponding quasi L\'evy measure is finite for all $\sigma >1$. 

\subsection{The case (1) of Main Theorem}
In this subsection, suppose the assumption (A), $A(n) < 0$ for some $n \in {\mathbb{N}}$ and $Z(s)=0$ for some $\sigma > 1$ and $t \in {\mathbb{R}}$. 
\begin{proof}[Proof of Main Theorem (1)]
According to Lemma \ref{lem:apo0a}, there exist $\sigma_0 >1$ and $t_0 \in {\mathbb{R}}$ such that $Z(\sigma_0+ {\rm{i}}t_0) = 0$ and $Z(s) \ne 0$ for all $\sigma > \sigma_0$. Hence for all $\sigma > \sigma_0$, we have (\ref{eq:zp1}) by Lemma \ref{lem:apo2} and modifying the proof of Main Theorem (2-1). Thus ${\mathcal{Z}}_\sigma (t)$ is a pretended infinitely divisible characteristic function if $\sigma > \sigma_0$.  

Moreover, the corresponding quasi L\'evy measure expressed as (\ref{eq:Levym1}) is finite for all $\sigma > 1+\sigma_0$ from (\ref{eq:clmf}) and Lemma \ref{lem:apo2}. Hence ${\mathcal{Z}}_\sigma (t)$ is a not infinitely divisible but quasi infinitely divisible characteristic function when $\sigma > 1+\sigma_0$.  

Any pretended infinitely divisible characteristic function does not vanish from the representation (\ref{INF}) and the fact that $\exp (z) \ne 0$ for any $z \in {\mathbb{C}}$. Hence, ${\mathcal{Z}}_\sigma (t)$ is not pretended infinitely divisible when $\sigma = \sigma_0$ since one has $Z(\sigma_0+ {\rm{i}}t_0) = 0$ for some  $t_0 \in {\mathbb{R}}$.
\end{proof}

\begin{example}
Let $2H(s) = \zeta^2(s)+ \zeta (2s)$. Then it holds that
\[
H(s) = \sum_{n=1}^\infty \frac{a(n)}{n^s}, \qquad
a(n) =
\begin{cases}
1 & \exists m \in {\mathbb{N}} \,\, \mbox{ s.t. } n= m^2,\\
1/2 & \mbox{otherwise}.
\end{cases}
\]
The function $H(s)$ is the one variable version of the Euler-Zagier double zeta star function. Note that for any $\delta >0$, this function has infinitely many zeros in the strip $1<\sigma < 1+\delta$ from \cite[Corollary 1.6]{NP16}. In this case, we have $A(n)=\log n$ for $n= 2,3,5,7$, and
\begin{equation*}
\begin{split}
&A(4) = (7/8) \log 4 , \qquad A(6) = (1/4) \log 6, \\ & A(8) = (1/8) \log 8, \qquad A(12) = -(1/8) \log 12 < 0.
\end{split}
\end{equation*}
This is proved as follows. By the definitions of $a(n)$ and $a^{-1}(n)$, one has
\[
1= a^{-1} (1), \qquad 0= a(1) a^{-1} (2) + a(2) a^{-1} (1), \qquad 0= a(1) a^{-1} (3) + a(3) a^{-1} (1)
\]
which imply $a^{-1}(2) = a^{-1} (3) = - 1/2$. By (\ref{eq:A}), we have $A(2)=\log 2$, $A(3) = \log 3$ and
\[
A(4) = a^\#(4) a^{-1}(1) + a^\#(2) a^{-1}(2) + a^\# (1) a^{-1} (4) = \log 4 + (-1/2)(1/2) \log 2 = (7/8) \log 4. 
\]
From the recursion formulas (\ref{eq:rec1}), we obtain
\[
-a^{-1} (4) = a^{-1} (1) a(4) + a^{-1} (2) a(2) = 1 + (-1/2)(1/2) =3/4.
\]
Moreover, one has $a^{-1}(5) = a^{-1} (7) = - 1/2$, $A(5) = \log 5$, $A(7)= \log 7$ and
\[
a^{-1} (6) = a^{-1} (1) a(6) + a^{-1} (2) a(3) + a^{-1} (3) a(2)  = 1/2 + (-1/2)(1/2) + (-1/2)(1/2) =0
\]
which implies
\begin{equation*}
\begin{split}
A(6) &= a^\#(6) a^{-1}(1) + a^\#(3) a^{-1}(2) + a^\# (2) a^{-1} (3) + a^\# (1) a^{-1} (6)\\
&= (1/2) \log 6 + (1/2) (-1/2) \log 3 + (1/2) (-1/2) \log 2 + 0 = (1/4) \log 6. 
\end{split}
\end{equation*}
In addition, it holds that
\begin{equation*}
\begin{split}
A(8) &= a^\#(8) a^{-1}(1) + a^\#(4) a^{-1}(2) + a^\#(2) a^{-1}(4) + a^\# (1) a^{-1} (8) \\
&= (1/2) \log 8 + (-1/2) \log 4 + (-3/4)(1/2) \log 2 + 0 =(1/8) \log 2 ,
\end{split}
\end{equation*}
\begin{equation*}
\begin{split}
&A(12) = 
a^\#(12) a^{-1}(1) + a^\#(6) a^{-1}(2) + a^\#(4) a^{-1}(3) + a^\# (3) a^{-1} (4) + a^\#(2) a^{-1}(6) \\
&= (1/2) \log 12 + (-1/2)(1/2) \log 6 + (-1/2) \log 4 + (-3/4)(1/2) \log 3 = -(1/8) \log 12  .
\end{split}
\end{equation*}

Finally, we show $H(s)$ does not vanish when $\sigma >2$. Obviously, we have
\begin{equation*}
\begin{split}
& \bigl|H(s)\bigl| \ge 1 - \sum_{n=2}^\infty \frac{a(n)}{n^\sigma} \ge 1 - \sum_{n=2}^\infty \frac{1}{n^\sigma}
\ge 1 - 2^{-\sigma} - \int_2^\infty \frac{dx}{x^\sigma}\\
& \ge 1 - 2^{-\sigma} -\frac{2^{1-\sigma}}{\sigma-1} \ge 1 - \frac{1}{4} - \frac{1}{2} >0
\end{split}
\end{equation*}
since the function $2^{-x}/x$ is monotonically decreasing if $x>0$. Therefore, the zeta distribution defined by $H(s)$ is not infinitely but quasi infinitely divisible for all $\sigma >3$. 
\end{example}

\subsection*{Acknowledgments}
The author would like to thank Doctor Tomokazu Onozuka who gave some helpful comments. The author was partially supported by JSPS grant 16K05077. 

 

\begin{thebibliography}{1}
\bibitem{AN12s}
{\rm T.~Aoyama \and T.~Nakamura}, `Multidimensional Shintani zeta functions and zeta distributions on ${\mathbb{R}}^d$', {\it{Tokyo Journal Mathematics}} {\bf{36}} (2013), no.~2, 521--538. {http://arxiv.org/abs/1204.4042}.

\bibitem{ANPE}
{\rm T.~Aoyama \and T.~Nakamura}, `Multidimensional polynomial Euler products and infinitely divisible distributions on ${\mathbb{R}}^d$', submitted (2012) {http://arxiv.org/abs/1204.4041}.

\bibitem{Apo}
{\rm T.~M.~Apostol}, 
{\em Introduction to Analytic Number Theory} (Undergraduate Texts in Mathematics, Springer, 1976). 

\bibitem{BG}
E.~Bomb\'eri and A.~Gosh, {\it{On the Davenport-Heilbronn function}}. translation in Russian Math.~Surveys {\bf{66}}  (2011), no.~2, 221--270. 

\bibitem{GK68}
{\rm B.~V.~Gnedenko \and A.~N.~Kolmogorov}, {\em Limit Distributions for Sums of Independent Random Variables (Translated from the Russian by Kai Lai Chung)}, (Addison-Wesley, 1968).

\bibitem{Ivic} A.~Ivi\'c, {\it{The Riemann zeta-function, Theory and applications}}, Reprint of the 1985 original. Dover Publications, Inc., Mineola, NY, 2003.

\bibitem{Khi}
{\rm A.~Ya.~Khinchine}, {\em Limit Theorems for Sums of Independent Random Variables (in Russian)}, (Moscow and Leningrad, 1938).

\bibitem{Lan}
E.~Landau, `\"Uber den Wertevorrat von $\zeta (s)$ in der Halbebene $\sigma >1$', Nachrichten von der Gesellschaft der Wissenschaften zu Gottingen, Mathematisch-Physikalische Klasse (1933) 81--91.

\bibitem{Lin}
{\rm G.~D.~Lin \and C.-~Y.~Hu}, `The Riemann zeta distribution', {\em Bernoulli}. {\bf{7}} (2001), no.~5, 817--828.

\bibitem{LS17}
{\rm A.~Lindner, L.~Pan \and K.~Sato}, `On quasi-infinitely divisible distributions', Trans.~Amer.~Math.~Soc. {\bf{370}} (2018),  no.~12, 8483--8520, (arXiv:1701.02400).

\bibitem{LO}
{\rm J.~V.~Linnik \and I.~V.~Ostrovskii}, `Decompositions of Random Variables and Vectors', (American Mathematical Society, Providence, RI, 1977 (Translation from the Russian original of 1972)).

\bibitem{Nakamura12} 
{\rm T.~Nakamura}, `A quasi-infinitely divisible characteristic function and its exponentiation', {\it{Statist.~Probab.~Lett.}} {\bf{83}} (2013), no.~10, 2256--2259. 

\bibitem{NaB} 
T.~Nakamura, `A complete Riemann zeta distribution and the Riemann hypothesis', {\it{Bernoulli}}. {\bf{21}} (2015),  no.~1, 604--617. 

\bibitem{NC} 
{\rm T.~Nakamura}, `Zeta distributions generated by multidimensional polynomial Euler products with complex coefficients ',  arXiv:1606.09418.

\bibitem{NP16} 
T.~Nakamura and {\L}ukasz Pa\'nkowski, `Value distribution for the derivatives of the logarithm of $L$-functions from the Selberg class in the half-plane of absolute convergence', {\it{Journal of Mathematical Analysis and Applications}}. {\bf{433}} (2016), no.~1, 566--577. 

\bibitem{QQ} 
H.~Queff\'elec and M.~Queff\'elec, {\it{Diophantine approximation and Dirichlet series}}. Harish-Chandra Research Institute Lecture Notes, 2. Hindustan Book Agency, New Delhi, 2013. 

\bibitem{S99} 
{\rm K.~Sato}, {\em L\'evy Processes and Infinitely Divisible Distributions}, Cambridge Studies in Advanced Mathematics, {\bf{68}} (Cambridge University Press, Cambridge, 1999). 


\bibitem{Sato12} 
{\rm K.~Sato}, `Stochastic integrals with respect to Levy processes and infinitely divisible distributions', {\it{Sugaku Expositions}} {\bf{27}} (2014), no.~1, 19--42. http://ksato.jp/.

\bibitem{Ten}
G.~Tenenbaum, {\it{Introduction to analytic and probabilistic number theory.}} Translated from the second French edition (1995) by C.~B.~Thomas. Cambridge Studies in Advanced Mathematics, {\bf{46}}. Cambridge University Press, Cambridge, 1995. 

\bibitem{Tit}
{\rm E.~C.~Titchmarsh}, {\em The theory of the Riemann zeta-function. Second edition. Edited and with a preface by D.~R.~Heath-Brown} (The Clarendon Press, Oxford University Press, New York, 1986). 
\end{thebibliography}
\end{document}